\theoremstyle{plain}
\newtheorem{thm}{Donotwrite}[section]
\newtheorem{definition}[thm]{Definition}
\newtheorem{theorem}[thm]{Theorem}
\newtheorem{lemma}[thm]{Lemma}
\theoremstyle{definition}
\newtheorem{example}[thm]{Example}
\newtheorem{remark}[thm]{Remark}
\numberwithin{equation}{section}
\newcommand{\nc}{\newcommand}
\nc{\op}{\oplus} \nc{\pv}{P^{\vee}}
\nc{\B}{\mathbf{B}} \nc{\V}{\mathbf{V}} 
\nc{\nbinom}[2]{\genfrac{}{}{0pt}{1}{{#1}}{{#2}}}
\nc{\qbinom}[2]{\left[\genfrac{}{}{0pt}{1}{{#1}}{{#2}}\right]}
\nc{\ft}{\tilde{f}} 
\nc{\et}{\tilde{e}} 
\nc{\Y}{\mathbf{Y}}
\nc{\T}{\mathbf{T}}
\nc{\R}{\mathbf{R}}
\nc{\C}{\mathbf{C}}
\nc{\D}{\mathbf{D}}
\nc{\ra}{\rightarrow} 
\nc{\vep}{\varepsilon} 
\nc{\vp}{\varphi}
\nc{\h}{\mathfrak{h}} 
\nc{\oP}{\overline{P}}
\nc{\Fit}{\tilde{F}_i} 
\nc{\Eit}{\tilde{E}_i}
\nc{\fit}{\tilde{f}_i} 
\nc{\eit}{\tilde{e}_i}
\nc{\mf}{\mathfrak}
\nc{\ds}{\displaystyle}
\nc{\oa}{a'}
\nc{\ob}{b'}
\nc{\mc}{\mathcal}
\nc{\bsx}{\boldsymbol{x}}
\nc{\imin}{i_{min}}
\nc{\imax}{i_{max}}
\nc{\dmin}{d_{min}}
\nc{\dmax}{d_{max}}
\begin{document}

\title[Multiplicities of some maximal dominant weights
of the $\widehat{s\ell}(n)$-modules $V(k\Lambda_0)$]{Multiplicities of some maximal dominant weights \\
of the $\widehat{s\ell}(n)$-modules $V(k\Lambda_0)$}

\author{Rebecca L. Jayne}
\address{Hampden-Sydney College, Hampden-Sydney, VA 23943}
\email{rjayne@hsc.edu} 

\author{Kailash C. Misra}
\address{Department of Mathematics, North Carolina State University,  Raleigh,  NC 27695-8205}
\email{misra@ncsu.edu}

\subjclass[2010]{Primary 17B67, 17B37, 17B10; Secondary 05A05, 05E10, 05A17}
\keywords{affine Lie algebra; crystal base; Lattice path; Young tableau; avoiding permutation}
\thanks{KCM: partially supported by Simons Foundation grant \#  636482}

\begin{abstract} For $n \geq 2$ consider the affine Lie algebra $\widehat{s\ell}(n)$ with simple roots $\{\alpha_i \mid 0 \leq i \leq n-1\}$.  Let $V(k\Lambda_0), \, k \in \mathbb{Z}_{\geq 1}$ denote the integrable highest weight $\widehat{s\ell}(n)$-module with highest weight $k\Lambda_0$. It is known that there are finitely many maximal dominant weights of $V(k\Lambda_0)$.
Using the crystal base realization of $V(k\Lambda_0)$ and lattice path combinatorics we determine the multiplicities of a large set of maximal dominant weights of the form $k\Lambda_0 - \lambda^\ell_{a,b}$ where $ \lambda^\ell_{a,b} = \ell\alpha_0 + (\ell-b)\alpha_1 + (\ell-(b+1))\alpha_2 + \cdots + \alpha_{\ell-b} + \alpha_{n-\ell+a} + 2\alpha_{n - \ell+a+1} + \ldots + (\ell-a)\alpha_{n-1}$, and $k \geq a+b$, $a,b \in \mathbb{Z}_{\geq 1}$, $\max\{a,b\} \leq \ell \leq  \left \lfloor \frac{n+a+b}{2} \right \rfloor-1 $. We show that these weight multiplicities are given by the number of certain pattern avoiding permutations of $\{1, 2, 3, \ldots \ell\}$. 
\end{abstract}

\maketitle
\bigskip
\section{Introduction} 

Affine Lie algebras form an important class of infinite dimensional Kac-Moody Lie algebras. The representation theory of affine Lie algebras have applications in many areas of mathematics and physics. Most of these applications arise from integrable representations of affine Lie algebras. For an affine Lie algebra $\mathfrak{g}$ and a dominant integral weight $\lambda$, there is a unique (up to isomorphism) integrable representation of $\mathfrak{g}$ with highest weight $\lambda$. We denote the corresponding  $\mathfrak{g}$-module  by $V(\lambda)$. Determining the multiplicities of the weights of $V(\lambda)$ is still an open problem. A weight $\mu$ of  $V(\lambda)$ is said to be maximal if $\mu + \delta$ is not a weight where $\delta$ is the null root of $\mathfrak{g}$. Maximal weights form a roof-like structure for the set of weights of $V(\lambda)$. Any maximal weight is Weyl group conjugate to a maximal dominant weight. It is known that there are finitely many maximal dominant weights of $V(\lambda)$ \cite[Proposition 12.6]{Kac}. In order to determine multiplicities of all maximal weights it suffices to determine the multiplicities of the maximal dominant weights. 


In this paper we focus on the affine Lie algebra $\mathfrak{g} = \widehat{sl}(n)$ with simple roots $\{\alpha_i \mid 0 \leq i \leq n-1\}$, simple coroots $\{h_i \mid 0 \leq i \leq n-1\}$, fundamental weights $\{\Lambda_i \mid 0 \leq i \leq n-1\}$. Note that $\Lambda_j(h_i) = \delta_{ij}$ and $\alpha_j(h_i) = a_{ij}$ where $A = (a_{ij})$ is the associated affine Cartan matrix where $a_{ii}=2, a_{i,i+1} = -1 = a_{i+1,i}, a_{0,n-1} = -1 =a_{n-1,0}$ and $a_{ij} = 0$ otherwise. The canonical central element and the null root are $c = h_0 + h_1 + \ldots + h_{n-1}$ and $\delta = \alpha_0 + \alpha_1 + \ldots + \alpha_{n-1}$ respectively. The free abelian group $P = \mathbb{Z} \Lambda_0 \oplus  \mathbb{Z} \Lambda_1 \oplus \ldots \oplus  \mathbb{Z} \Lambda_{n-1} \oplus  \mathbb{Z} \delta$ is the weight lattice and $P^+ =  \{ \lambda \in P \mid \lambda({h_i}) \in \mathbb{Z}_{\geq 0} \text{ for all }  i \in I \}$ is the set of dominant integral weights. We consider the integrable highest weight $\widehat{s\ell}(n)$-module $V(k\Lambda_0)$ with highest weight $k\Lambda_0$, $k \in \mathbb{Z}_{\geq 2}$. We denote the set of all maximal weights of $V(k\Lambda_0)$ by $\max(k\Lambda_0)$. The explicit forms of the set of maximal dominant weights $\max(k\Lambda_0) \cap P^+$ are given in \cite[Theorem 3.6]{JM1}. In particular, the weights of the form $k\Lambda_0 - \lambda^\ell_{a,b}$ where $ \lambda^\ell_{a,b} = \ell\alpha_0 + (\ell-b)\alpha_1 + (\ell-(b+1))\alpha_2 + \cdots + \alpha_{\ell-b} + \alpha_{n-\ell+a} + 2\alpha_{n - \ell+a+1} + \ldots + (\ell-a)\alpha_{n-1}$,  $a,b \in \mathbb{Z}_{\geq 1}$, $\max\{a,b\} \leq \ell \leq  \left \lfloor \frac{n+a+b}{2} \right \rfloor-1 $, and $k \geq a+b$, are maximal dominant weights of $V(k\Lambda_0)$. In \cite{Tsu} Tsuchioka showed that for $a=1=b, k=2$ the multiplicities of the maximal dominant weights $2\Lambda_0 - \lambda^\ell_{1,1}$ of $V(2\Lambda_0)$ are given by certain Catalan numbers which are same as the number of $321$-avoiding permutations of $[\ell] = \{1,2, \cdots , \ell\}$. In \cite{JM2}, using the extended Young diagram realization of the crystal $B(k\Lambda_0)$ for the $\widehat{s\ell}(n)$-module $V(k\Lambda_0)$, we proved that the multiplicities of the maximal dominant weights $k\Lambda_0 - \lambda^\ell_{1,1}$ are given by the 
$(k+1)k \cdots 21$ avoiding permutations of $[\ell]$. This result was also obtained in \cite{TW} from a different point of view. More recently, 
Kyu-Hwan Lee and collaborators \cite{KLO} have given weight multiplicities of level $2$ and $3$ maximal dominant weights for other classical affine Lie algebras using some new classes of Young tableaux associated with corresponding crystal bases.

In this paper we generalize our results in \cite{JM2} and determine multiplicities of the maximal dominant weights 
$k\Lambda_0 - \lambda^\ell_{a,b}$ in $V(k\Lambda_0)$. As in \cite{JM2} we use the extended Young diagram realization of the crystal $B(k\Lambda_0)$ for the $\widehat{s\ell}(n)$-module $V(k\Lambda_0)$ which we briefly recall below.

An extended Young diagram $Y = (y_{i})_{i \geq 0}$ of charge $0$ is a weakly increasing sequence with integer entries such that $y_{i} = 0$ for 
$i  \gg  0$.  Associated with each sequence $Y = (y_{i})_{i\geq 0}$ is a unique diagram in the $\mathbb{Z}_{\ge 0} \times \mathbb{Z}$ right half lattice.  For each element $y_{i}$ of the sequence, we draw a column with depth $-y_{i}$, aligned so the top of the column is on the line $y = 0$.  We fill in square boxes for all columns from the depth to the line $y=0$ and obtain a diagram with a finite number of boxes.  We color a box with lower right corner at $(a,b)$ by color $j$, where $(a+b) \equiv j \pmod{n}$.   For simplicity, we refer to color $(n-j)$ by $-j$.   The weight of an extended Young diagram of charge $i$ is $ wt(Y) = \Lambda_{i} - \sum_{j=0}^{n-1}c_{j}\alpha_{j},$ where $c_{j}$ is the number of boxes of color $j$ in the diagram.  For $Y = (y_{i})_{i \geq 0}$ we denote $Y[n] = (y_{i} + n)_{i \geq 0}$. Thus by definition $Y[n]$ is a vertical shift of $Y$ by $n$ units. 

The weight of a $k$-tuple of extended Young diagrams $\Y = (Y_1, Y_2, \ldots , Y_k)$ of charge $0$ is $ wt(\Y) = \sum_{i=1}^{k}wt(Y_{i})$.   Let $\mathcal{Y}(k\Lambda_0)$ denote the set of all $k$-tuples of extended Young diagrams of charge zero.  For two extended Young diagrams $Y=(y_i)_{i \geq 0}$ and $Y'=(y_i')_{i \geq 0}$ we say  $Y \subseteq Y'$ if $y_i \geq y_i'$ for all $i$ which means $Y$ is contained in $Y'$ as a diagram. Note that this containment is transitive.  The realization of the crystal $B(k\Lambda_{0})$ for $V(k\Lambda_0)$ is given in the following theorem.

\begin{theorem}\label{JMMOTh} \cite{JMMO}  Let $V(k\Lambda_0)$ be the irreducible  $\widehat{sl}(n)$-module of highest weight $k\Lambda_0$ and let $B(k\Lambda_0)$ be its crystal.  Then $B(k\Lambda_0) = \{ \Y = (Y_{1}, \ldots , Y_{k}) \in \mathcal{Y}(k\Lambda_0) \mid Y_{1} \supseteq Y_{2} \supseteq \cdots \supseteq Y_{k}  \supseteq Y_1[n], \text{and for each } i \geq 0, \exists \  j \geq 1 \text{ s.t. } (Y_{j+1})_{i} > (Y_{j})_{i+1} \}$.
\end{theorem}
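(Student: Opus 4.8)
The plan is to build the crystal structure on $k$-tuples of extended Young diagrams out of the level-one case and then identify the connected component of the vacuum element. First I would recall the realization of the basic crystal $B(\Lambda_0)$ by single extended Young diagrams of charge $0$: the empty diagram is the unique highest weight vector of weight $\Lambda_0$, and the Kashiwara operators $\eit, \fit$ act by removing or adding a single box of color $i$ determined by a signature (bracketing) rule applied to the addable and removable $i$-colored corners of $Y$. One verifies directly that this yields an abstract $U_q(\widehat{sl}(n))$-crystal, and that its weight generating function matches the principally specialized character of $V(\Lambda_0)$ (equivalently, via the boson--fermion correspondence on Fock space), so that this crystal is isomorphic to $B(\Lambda_0)$.

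For the level-$k$ statement I would invoke the crystal embedding $B(k\Lambda_0) \hookrightarrow B(\Lambda_0)^{\ot k}$, arising from $V(k\Lambda_0)$ being the Cartan component of $V(\Lambda_0)^{\ot k}$, together with the tensor product rule for crystals. On $B(\Lambda_0)^{\ot k}$ the operators $\eit, \fit$ act by the usual signature-counting procedure on the concatenated $i$-signatures of the components $Y_1, \ldots, Y_k$, and the $k$-tuple of empty diagrams is the highest weight element of weight $k\Lambda_0$. By Kashiwara's characterization, $B(k\Lambda_0)$ is then the connected component of this vacuum element inside $B(\Lambda_0)^{\ot k}$, so it remains only to prove that this component coincides with the explicitly described set.

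The main obstacle is this combinatorial identification, which I would carry out by a double inclusion. For the first direction, I would show that both defining conditions hold for the vacuum $k$-tuple and are preserved by $\eit$ and $\fit$, so that the connected component is contained in the described set; here the nesting chain $Y_1 \supseteq \cdots \supseteq Y_k \supseteq Y_1[n]$ reflects the level-$k$ constraint, with the shift $[n]$ encoding the $\delta$-grading, while the separation condition ``for each $i$ there is $j$ with $(Y_{j+1})_i > (Y_j)_{i+1}$'' records that no two adjacent diagrams are forced to agree in a way that would push the tuple outside the component. For the reverse inclusion, I would argue that any $\Y$ satisfying the two conditions can be raised to the vacuum by repeated application of the $\eit$: one shows that such a $\Y$ is not simultaneously $\eit$-highest for all $i$ unless it is the empty $k$-tuple, using the containment and separation conditions to locate a removable box, and that raising preserves the conditions. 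This establishes that the described set lies in a single connected component containing the vacuum, giving equality.

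Finally I would confirm that the weight $\wt(\Y) = \sum_i \wt(Y_i)$ computed from box colors agrees with the crystal weight inherited from $B(\Lambda_0)^{\ot k}$, so that the character of the described set equals that of $V(k\Lambda_0)$; combined with the embedding this forces the bijection to be a crystal isomorphism. The delicate point throughout is verifying that the separation condition is exactly the right boundary constraint isolating one connected component --- neither too strong, which would lose weight multiplicities, nor too weak, which would admit elements outside $B(k\Lambda_0)$ --- and I expect establishing its stability under the Kashiwara operators to be the technical heart of the argument.
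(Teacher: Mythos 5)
The paper does not prove this statement at all: it is imported verbatim from \cite{JMMO} (with the level-one case going back to Misra--Miwa), so there is no internal proof to compare yours against; what follows assesses your proposal on its own terms. Your overall architecture --- realize the level-one crystal on extended Young diagrams, embed $B(k\Lambda_0)$ into $B(\Lambda_0)^{\ot k}$ as the connected component of the vacuum, then identify that component with the stated set by a double inclusion (stability of the conditions under $\eit$, $\fit$, plus an argument that every non-vacuum element of the set can be raised) --- is indeed the standard route to such realizations, and the reduction to Kashiwara's connected-component characterization is correct.

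However, your foundation contains a genuine error. The set of \emph{all} charge-$0$ extended Young diagrams, equipped with the signature rule, is the crystal of the $q$-deformed Fock space, not of $V(\Lambda_0)$: the Fock space is strictly larger, its character (graded by number of boxes) exceeding that of $V(\Lambda_0)$ by the boson factor $\prod_{m\ge 1}(1-q^{nm})^{-1}$, so your proposed verification ``by character matching (equivalently, via the boson--fermion correspondence on Fock space)'' would fail rather than succeed. Indeed, the $k=1$ case of the very theorem you are proving already says that $B(\Lambda_0)$ is the \emph{proper} subset of diagrams satisfying $y_{i+1} < y_i + n$ for all $i$ (the $n$-restricted ones): with $Y_2 := Y_1[n]$, this is exactly the separation condition, which is therefore doing essential work at level one, on a single diagram. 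This also shows that your reading of that condition --- as bookkeeping about adjacent components of the $k$-tuple that ``would push the tuple outside the component'' --- is misdirected: it is the restrictedness condition that cuts the irreducible highest weight component out of a larger Fock-type crystal, cyclically tying $Y_k$ back to $Y_1[n]$, and the stability arguments you defer would have to be organized around this cyclic constraint. A secondary logical gap: even with the correct underlying set, an abstract crystal whose character matches that of $V(\Lambda_0)$ need not be isomorphic to $B(\Lambda_0)$; one needs connectedness plus an embedding into a known crystal (or a regularity/Stembridge-type criterion), which is precisely what your double inclusion is supposed to supply and so cannot be assumed at the outset. The plan is salvageable --- cite the Misra--Miwa theorem for the restricted level-one realization and then actually carry out the deferred combinatorial core --- but as written it starts from a false level-one statement and misidentifies the role of the one condition whose handling you yourself flag as the technical heart.
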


\begin{remark}  Let $B(k\Lambda_0)_{k\Lambda_0-\lambda^\ell_{a,b}}$ denote the set of $\Y \in B(k\Lambda_0)$ such that wt$(\Y) = k\Lambda_0-\lambda^\ell_{a,b}$.  Then mult$_{k\Lambda_0}(k\Lambda_0-\lambda^\ell_{a,b}) = |B(k\Lambda_0)_{k\Lambda_0-\lambda^\ell_{a,b}}|$.
\end{remark}

Our main result is that the multiplicity of $k\Lambda_0 - \lambda^\ell_{a,b}$ in $V(k\Lambda_0)$ equals to the number of $(k+1)k(k-1)\cdots21$-avoiding permutations of $[\ell]$ in which the subsequence of integers 1 through $a$ is in decreasing order and the first $b$ elements are in decreasing order. To prove this result we use the extended Young diagram realization of the crystal $B(k\Lambda_0)$ to express the multiplicity with the number of ordered pairs of certain lattice paths in Section 2. In Section 3, we show that these lattice paths are in one-to-one correspondence with certain standard Young tableaux, hence the multiplicity can be obtained by counting the number of corresponding pairs of tableaux. Finally in Section 4 we use the well-known RSK correspondence and obtain the desired result.

We thank Kyu-Hwan Lee for some discussion in the early stage of this project where he shared with us some data for the multiplicities in the particular case when $a=2, b=1$.


\begin{section}{Multiplicity and Admissible Sequences of Paths}

\begin{figure}[h]
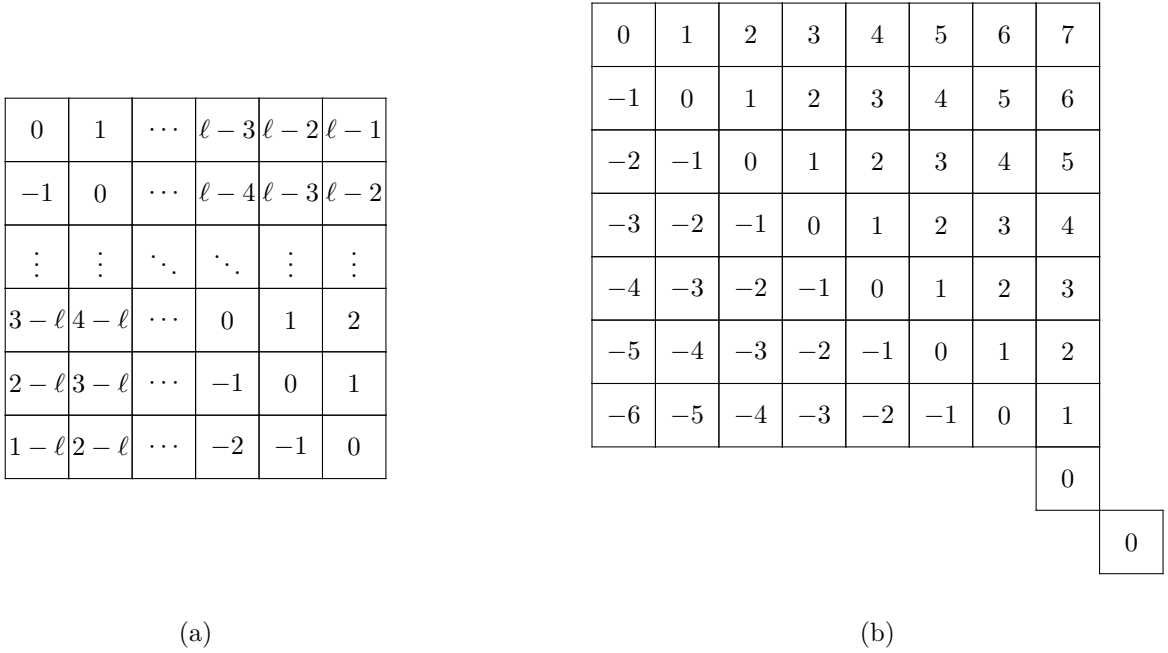


\begin{multicols}{2}
{  $$ \normalsize \tableau{0 & 1 &  \cdots &  \ell - 3& \ell - 2 & \ell -1 \\ -1&  0 & \cdots & \ell - 4 & \ell - 3 & \ell -2\\  \vdots  & \vdots &  \ddots &  \ddots & \vdots & \vdots \\ 3 - \ell & 4-\ell & \cdots & 0 & 1 & 2 \\ 2-\ell  &  3-\ell  & \cdots &-1 &  0 & 1\\ 1-\ell & 2-\ell & \cdots & -2  & -1   & 0 }$$}

(a)

{  $$ \normalsize \tableau{0 & 1 & 2 &   3 & 4 & 5 & 6 & 7 & \fr[l] \\ -1 & 0 & 1 & 2 & 3 & 4 & 5 &  6 &  \fr[l]  \\  -2 & -1 & 0 & 1 & 2 & 3 & 4 & 5 &  \fr[l]   \\ -3 & -2 & -1 & 0 & 1 & 2 & 3 & 4 & \fr[l]  \\ -4 & -3 & -2 & -1 & 0 & 1 & 2 & 3 & \fr[l]  \\  -5 & -4 & -3 & -2 & -1 & 0 & 1 & 2 &\fr[l]   \\ -6 & -5 & -4 & -3 & -2 & -1 & 0 & 1 & \fr[l]  \\  \fr[] &\fr[] &\fr[] &\fr[] &\fr[] &\fr[] &\fr[]   & 0 \\   \fr[] & \fr[] &\fr[] &\fr[] &\fr[] &\fr[] &\fr[] &\fr[]  & 0 \\  }
$$}

(b)

\end{multicols}

\caption{The colored square $Y^\ell$ (a) and $Y^9_{3,2}$ (b)}
\label{thediagramY}
\end{figure}

We consider the colored diagram $Y^\ell$, as in Figure \ref{thediagramY}(a).  This figure is an $\ell \times \ell$ square made up of colored boxes, with color $0$ on the diagonal.  To form the diagram $Y^\ell_{a,b}$ as in Figure \ref{thediagramY}(b), we delete the boxes in the bottom $a-1$ rows to the left of the 0-diagonal  and in the rightmost $b-1$ columns above the 0-diagonal of $Y^\ell$.  We take the upper left corner to be the origin and draw two different types of sequences of $k-1$ lattice paths on the diagram $Y^\ell_{a,b}$ .  The first type of sequence, $\{ p_1^L, p_2^L, \ldots, p_{k-1}^L \}$, is called a sequence of lower paths and begins at $(0,-\ell+a-1)$ and ends $\ell - a$ right ($R$) and up ($U$) moves later on the diagonal of -1 colored boxes (i.e on the line $y = -x - 1$).  The other type of sequence, $\{ p_1^U, p_2^U, \ldots, p_{k-1}^U \}$, is called a sequence of upper paths and begins at $(\ell-b+1,0)$ and ends $\ell-b$ down ($D$) and left ($L$) moves later on the diagonal of 1 colored boxes (i.e on the line $y = -x+1$).  Both types of paths are drawn in such a way that for each color, the number of colored boxes of a specific color below $p_i^L$ (resp. $p_i^U$) is greater than or equal to the number of boxes of that color below $p_{i-1}^L$ (resp. $p_{i-1}^U$).  

For $2 \leq i \leq k-1$, we let $t_i^j$ be the number of boxes of color $j$ between paths  $p_{i-1}^L$ and $p_{i}^L$ for $ j < 0$ and between paths $p_{i-1}^U$ and $p_{i}^U$ for $j > 0$.   We define $t_1^j$ (resp. $t_0^j$) to be the number of boxes of color $j$ below path $p_{1}^L$ (resp. above $p_{k-1}^L$) for $j<0$ and below $p_{1}^U$ (resp. above $p_{k-1}^U$) for $j>0$.  We define admissible sequences of lower and upper paths as follows which is a generalization of the corresponding notions in  \cite[Definition 2.1]{JM2}.

\begin{definition}  \label{pathsdefL} Let $\{ p_1^L, p_2^L, \ldots, p_{k-1}^L \}$  (resp. $\{ p_1^U, p_2^U, \ldots, p_{k-1}^U \}$)  be a sequence of lower  (resp. upper)  paths in $Y^\ell_{a,b}$.  This sequence is an admissible sequence of lower (resp. upper) paths if the following criteria are met.
\begin{enumerate}

\item \label{conds}  $p_1^L$ (resp. $p_1^U$) does not cross the diagonal $y = x - (\ell-a+1)$ (resp. $y = x - (\ell-b+1)$)
\item for  $2 \leq i \leq k-1$, $t_i^j \leq \min \left \{t_{i-1}^j, \ell - |j| -(a-1)- t_1^j - \ds \sum_{m=1}^{i-1}t_m^j \right \}$ for $j \leq -1$\\ (resp.  $t_i^j \leq \min \left \{t_{i-1}^j, \ell - j -(b-1)- t_1^j - \ds \sum_{m=1}^{i-1}t_m^j \right \}$ for $j \geq 1$) and
\item for  $2 \leq i \leq k-1$,  $t_i^j \leq t_i^{j+1}$ for $j < -1$ (resp. $t_i^j \leq t_i^{j-1}$ for $j > 1$)

\end{enumerate}
\end{definition}

We associate with each admissible sequence of lower paths a partition $\tau = (t_0^{-1}, t_1^{-1}, t_2^{-1}, \ldots, t_{k-1}^{-1}) \vdash \ell -a$ and say that the sequence is of type $\tau$.   With each admissible sequence of upper paths we associate a partition $\omega = (t_0^{1}, t_1^1, t_2^1, \ldots, t_{k-1}^1) \vdash \ell-b$, and say that the sequence is of type $\omega$.  For a particular $\tau$ (resp. $\omega$), we let $L^{\ell,k,\tau}_a$ (resp. $U^{\ell,k,\omega}_b$) denote the set of all admissible lower (resp. upper) sequences of type $\tau$ (resp. $\omega$).

Let $\mu$ be a partition of some integer $\ell$ with at least $j$ rows and let $P^{\mu}_j$ be the set of partitions of $\ell-j$ such that exactly $j$ distinct parts of $\mu$ are decreased by 1.  For example,  $P^{(3,2,2,1)}_2 = \{(2,2,1,1), (2,2,2), (3,1,1,1),  (3,2,1)\}$.  For a partition $\mu$, we let $l(\mu)$ be the length of $\mu$, which is the number of parts. Let $\mu \vdash \ell$, with $\max\{a,b\} \leq l(\mu) \leq k$ and define $\mc{S}^{\ell,k,\mu}_{a,b}$ to be the set of all ordered pairs $({\bf L},{\bf U})$, where ${\bf L} \in L^{\ell,k,\tau}_a$ for any $\tau \in P_a^\mu$ and ${\bf U} \in U^{\ell,k,\omega}_b$ for any $\omega \in P_b^\mu$.

\begin{example} \label{LUex}  
Let $\mu = (3,2,2,1,1)$, $\ell = 9$, $a = 3, b=2$, $k=5$. In Figure \ref{LUfig}(a), we have an admissible sequence of lower paths $({\bf L} = \{p_1^L, p_2^L, p_3^L, p_4^L\})$ and a table with the moves of each path.  Note that $t_0^{-1} = 2, t_1^{-1} = 1, t_2^{-1} = 1, t_3^{-1} = 1, t_4^{-1} = 1$ and so $\mathbf{L}$ is of type $\tau=  (2,1,1,1,1) \in P_3^{(3,2,2,1,1)}$.  In Figure \ref{LUfig}(b), we have an admissible sequence of upper paths $({\bf U} = \{p_1^U, p_2^U, p_3^U, p_4^U\})$ and the associated table.  We see that $t_0^{1} = 2, t_1^{1} = 2, t_2^{1} = 1, t_3^{1} = 1, t_4^{1} = 1$ and so $\mathbf{U}$ is of type $\omega = (2,2,1,1,1)\in P_2^{(3,2,2,1,1)}$.  So $({\bf L},{\bf U}) \in \mc{S}^{9,5,(3,2,2,1,1)}_{3,2}$.

\begin{figure}[h]

\begin{multicols}{2}

\includegraphics[scale=.6]{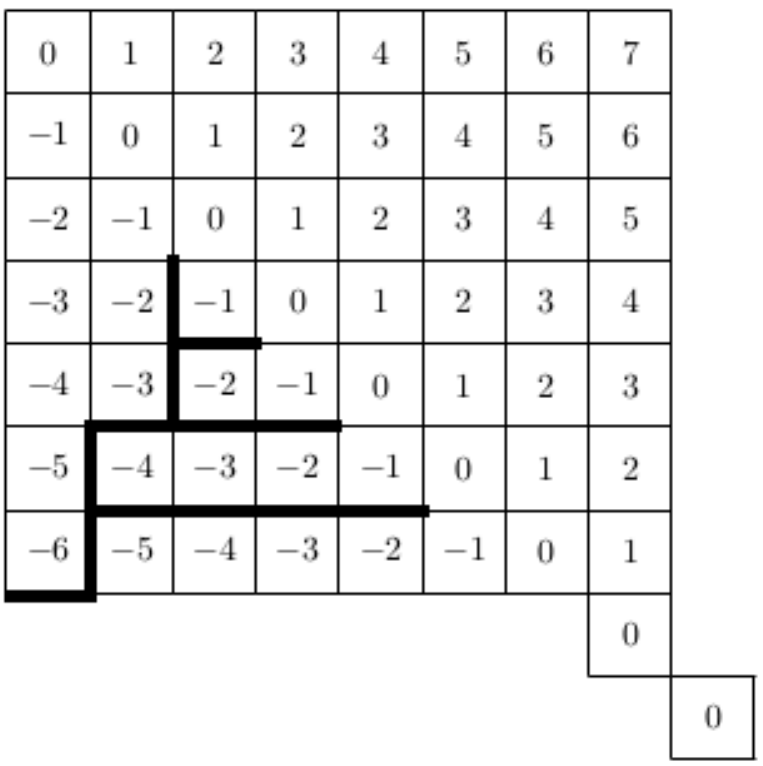}

\scalebox{.8}{
\begin{tabular}{lllllll} 
\hline\noalign{\smallskip}
 & \multicolumn{6}{c} {Move} \\
 \noalign{\smallskip}\hline\noalign{\smallskip}
Path & 1 & 2 & 3 & 4 & 5 & 6   \\ 
\noalign{\smallskip}\hline\noalign{\smallskip}
1 & $R$ & $U$ & $R$ & $R$ & $R$ & $R$ \\
2 & $R$ & $U$ & $U$ & $R$ & $R$ & $R$  \\ 
3 & $R$ & $U$ & $U$ & $R$ & $U$ & $R$  \\ 
4 & $R$ & $U$ & $U$ & $R$ & $U$ & $U$ \\ 
\noalign{\smallskip}\hline
\end{tabular} }

\includegraphics[scale=.6]{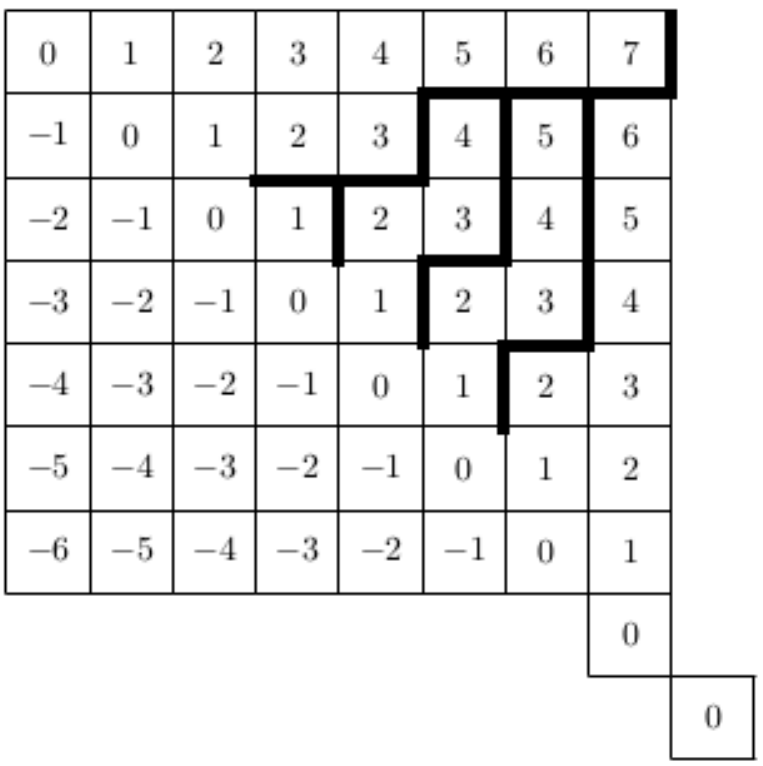}

\scalebox{.8}{
\begin{tabular}{llllllll} 
\hline\noalign{\smallskip}
 & \multicolumn{7}{c} {Move} \\
 \noalign{\smallskip}\hline\noalign{\smallskip}
Path & 1 & 2 & 3 & 4 & 5 & 6  & 7  \\ 
\noalign{\smallskip}\hline\noalign{\smallskip}
1 & $D$ & $L$ & $D$ & $D$ & $D$ & $L$ & $D$ \\ 
2 & $D$ & $L$ & $L$ & $D$ & $D$ & $L$ & $D$ \\ 
3 & $D$ & $L$ & $L$ & $L$ & $D$ & $L$ & $D$ \\ 
4 & $D$ & $L$ & $L$ & $L$ & $D$ & $L$ & $L$ \\ 
\noalign{\smallskip}\hline
\end{tabular} }

\end{multicols}

\begin{multicols}{2}
(a)

(b)
\end{multicols}

\caption{Example of admissible  sequences of lower/upper paths}
\label{LUfig}
\end{figure}
\end{example}

We remark that an admissible sequence of lower paths in $Y^\ell_{a,b}$ consists of the first $\ell-a$ moves of each path in an admissible sequence of $k-1$ paths  in the $(\ell-a+1) \times (\ell-a+1)$ square (see \cite[Definition 4.5]{JM1} and \cite[Definition 2.1]{JM2}).
 
\begin{theorem}\label{mult_admiss} Let $a, b \geq 1$.  For  $k \geq a+b$,  $\max\{a,b\} \leq \ell \leq  \left \lfloor \frac{n+a+b}{2} \right \rfloor -1 $, 
 $$\text{mult}_{V(k\Lambda_0)}(k\Lambda_0 - \lambda_{a,b}^\ell) =  \ds \left |\bigcup_{\mu \vdash \ell, \max\{a,b\} \leq l(\mu) \leq k} \mc{S}^{\ell,k,\mu}_{a,b} \right |.$$  
\end{theorem}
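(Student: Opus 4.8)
The plan is to construct an explicit weight-preserving bijection between the crystal set $B(k\Lambda_0)_{k\Lambda_0 - \lambda^\ell_{a,b}}$ and the set of pairs $\bigcup_{\mu} \mathcal{S}^{\ell,k,\mu}_{a,b}$, after which the theorem is immediate from the Remark identifying the multiplicity with $|B(k\Lambda_0)_{k\Lambda_0-\lambda^\ell_{a,b}}|$. First I would record the color content of $\lambda^\ell_{a,b}$: reading off its coefficients, color $0$ occurs $\ell$ times, each positive color $j$ (for $1 \le j \le \ell-b$) occurs $\ell-b-j+1$ times, and each negative color $-j$ (for $1 \le j \le \ell-a$) occurs $\ell-a-j+1$ times. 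These are exactly the color multiplicities of the reference diagram $Y^\ell_{a,b}$, which is the combinatorial reason that any $\mathbf{Y}$ of the prescribed weight sits inside a copy of $Y^\ell_{a,b}$. The hypothesis $\ell \le \lfloor (n+a+b)/2\rfloor - 1$ implies $\ell-b < n-\ell+a$, i.e. the positive and negative colors occurring in $\lambda^\ell_{a,b}$ occupy disjoint residues modulo $n$; I would use this to guarantee that the below-diagonal and above-diagonal data of each $Y_i$ never interfere, which is what permits the decoupling into independent lower and upper paths.

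Next I would set up the correspondence $\mathbf{Y} \mapsto (\mathbf{L},\mathbf{U})$. Given $\mathbf{Y}=(Y_1,\ldots,Y_k)$ with $Y_1 \supseteq \cdots \supseteq Y_k \supseteq Y_1[n]$, the boundary of each $Y_i$ overlaid on $Y^\ell_{a,b}$ breaks into a portion among the negative-colored boxes and a portion among the positive-colored boxes; reading these off for $i=1,\ldots,k-1$ produces the lower path $p_i^L$ and the upper path $p_i^U$, and the numbers $t_i^j$ of boxes of color $j$ lying between consecutive boundaries are exactly the between-path box counts of Definition \ref{pathsdefL}. The containment chain translates into the nesting of boundaries, and I would check that it forces admissibility condition (2) (monotonicity in $i$), while the requirement that each $Y_i$ be a genuine weakly increasing extended Young diagram forces condition (3) (monotonicity in $j$). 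Condition (1), that $p_1^L$ (resp. $p_1^U$) not cross the stated diagonal, I would obtain from the wrap-around containment $Y_k \supseteq Y_1[n]$, which bounds the size of the largest diagram $Y_1$.

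The partition $\mu$ enters through the color-$0$ data: letting $\mu$ be the multiset of numbers of color-$0$ boxes in $Y_1,\ldots,Y_k$ sorted into a partition, one has $\mu \vdash \ell$ with $\max\{a,b\} \le l(\mu) \le k$. The crucial local fact, which I would verify directly from the shape of an extended Young diagram, is that the number of color-$(-1)$ boxes of $Y_i$ equals the number of its color-$0$ boxes or is one less, and likewise for color $(+1)$. Summing over the tuple and matching against the total color contents $\ell-a$ and $\ell-b$, exactly $a$ of the diagrams realize the ``one less'' option on the negative side and exactly $b$ on the positive side. After sorting, this says precisely that $\tau \in P^\mu_a$ and $\omega \in P^\mu_b$, so that $(\mathbf{L},\mathbf{U}) \in \mathcal{S}^{\ell,k,\mu}_{a,b}$; conversely, from a pair in the union one reassembles the $k$ boundary curves and hence $\mathbf{Y}$.

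I expect the main obstacle to be showing that this map is a genuine bijection onto the union rather than a surjection counted with multiplicity. A priori a compatible pair $(\mathbf{L},\mathbf{U})$ may satisfy $\tau \in P^\mu_a$ and $\omega \in P^\mu_b$ for several partitions $\mu$, so without further input one would reconstruct several nested tuples. The point is that the existence clause of Theorem \ref{JMMOTh} (for each $i \ge 0$ there is $j$ with $(Y_{j+1})_i > (Y_j)_{i+1}$) selects exactly one admissible $\mu$, equivalently one legitimate crystal element, for each pair; establishing this, so that the naive sum $\sum_{\mu} |\mathcal{S}^{\ell,k,\mu}_{a,b}|$ collapses to $|\bigcup_{\mu}\mathcal{S}^{\ell,k,\mu}_{a,b}|$, is the technical heart of the argument. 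A secondary subtlety is that the passage from the unsorted per-diagram color counts to the sorted partitions $\mu,\tau,\omega$ must be shown to respect the vertical-strip relations defining $P^\mu_a$ and $P^\mu_b$, which I would handle by tracking the boundaries in order of decreasing size.
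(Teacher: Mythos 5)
Your overall plan---identifying $B(k\Lambda_0)_{k\Lambda_0-\lambda^\ell_{a,b}}$ with pairs of nested path sequences by overlaying the $Y_i$ on $Y^\ell_{a,b}$, reading off boundaries, and deducing $\tau\in P^\mu_a$, $\omega\in P^\mu_b$ from the extended-Young-diagram shapes---is the same as the paper's. The fatal problem is the step you yourself call the technical heart. You claim that when a pair $(\mathbf{L},\mathbf{U})$ is compatible with several partitions $\mu$, the existence clause of Theorem \ref{JMMOTh} selects exactly one $\mu$ yielding a genuine crystal element, so that the weight space bijects with the plain set-theoretic union. This is false, and the mechanism you invoke cannot do any selecting even in principle: in the regime $\ell\le\lfloor(n+a+b)/2\rfloor-1$ one has $n\ge 2\ell+2-a-b$, so for \emph{any} candidate tuple $(Y_1[n])_i\ge n-\ell+a-1\ge \ell-b+1>0\ge (Y_k)_{i+1}$; that is, the existence clause holds automatically (take $j=k$) and never discriminates between different $\mu$---this is exactly the computation made in the paper's own proof. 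Concretely, take $n=4$, $k=2$, $a=b=1$, $\ell=2$. There is exactly one admissible lower path (a single $R$ move; the only other candidate, a single $U$ move, has $(t_0^{-1},t_1^{-1})=(0,1)$, not a partition, so it lies in no $L^{2,2,\tau}_1$) and likewise one admissible upper path (a single $D$ move), so $\bigl|\bigcup_\mu\mathcal{S}^{2,2,\mu}_{1,1}\bigr|=1$, the unique pair lying in both $\mathcal{S}^{2,2,(2)}_{1,1}$ and $\mathcal{S}^{2,2,(1,1)}_{1,1}$. But the weight space has two elements: $(Y_1,Y_2)$ with $Y_1$ the full $2\times2$ square and $Y_2=\emptyset$ (the $\mu=(2)$ reconstruction), and $(Y_1,Y_2)$ with $Y_1$ the three-box hook and $Y_2$ a single box (the $\mu=(1,1)$ reconstruction). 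Both satisfy every condition of Theorem \ref{JMMOTh}, and indeed the multiplicity is $2$, matching both the sum formula of Theorem \ref{formulaTh} and the count of $321$-avoiding permutations of $\{1,2\}$. So the map from crystal elements to pairs is two-to-one here, and no uniqueness claim can rescue a bijection with the plain union.

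What the paper's proof actually establishes is a bijection between the weight space and pairs \emph{together with} the choice of $\mu$: the forward construction takes $\mu$ as input (it places $\mu_2,\ldots,\mu_k,\mu_1$ boxes of color $0$ into $Y_2,\ldots,Y_k,Y_1$), and the inverse recovers $\mu$ from the color-$0$ contents of the $Y_i$---data invisible to the paths, which record only the nonzero colors. In other words, the union in the statement must be read as a disjoint union, equivalently the multiplicity is $\sum_\mu\bigl|\mathcal{S}^{\ell,k,\mu}_{a,b}\bigr|$; this is how the paper uses the result afterwards (Theorem \ref{formulaTh} is a sum over $\mu$, and in Theorem \ref{pairsforRSKLemma} the ambiguity disappears because there $\mu$ is the common shape of the two tableaux, hence recoverable from the pair). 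To repair your argument, delete the collapsing claim and instead carry $\mu$ as part of the combinatorial data throughout, exactly as the paper does; your remaining steps (reading off paths, verifying Definition \ref{pathsdefL}, the vertical-strip relations) then go through along the paper's lines.
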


\begin{proof}  It is enough to show that the elements in $\bigcup_{\mu \vdash \ell, \max\{a,b\} \leq l(\mu) \leq k} \mc{S}^{\ell,k,\mu}_{a,b}$ are in one-to-one correspondence with the $k$-tuples of extended Young diagrams in $B(k\Lambda_0)_{k\Lambda_0-\lambda^\ell_{a,b}}$.

 Let $\mu  = (\mu_1, \mu_2, \ldots, \mu_k) \vdash \ell, \max\{a,b\} \leq l(\mu) \leq k$, and let $({\bf L},{\bf U})\in \mc{S}^{\ell,k,\mu}_{a,b}$ be comprised of an admissible sequence of lower paths ${\bf L} =\{ p_1^L, p_2^L, \ldots, p_{k-1}^L \}$ of type $\tau \in P^\mu_a$  and an admissible sequence of upper paths ${\bf U} =  \{ p_1^U, p_2^U, \ldots, p_{k-1}^U \}$ of type $\omega \in P^\mu_b$. We construct the $k$-tuple ${\bf Y}$ of extended Young diagrams as follows. Remove the boxes of colors $j < 0$ below $p_1^L$, $\mu_2$ boxes of color 0,  and the boxes of colors $j > 0$ below $p_1^U$.  Since $\tau \in P^\mu_a$ and $\omega \in P^\mu_b$  (and so $\mu_2 = \tau_2$ or $\tau_2+ 1$ and $\mu_2 = \omega_2$ or $\omega_2+1$) and $p_1^L$ and $p_1^U$ are lattice paths, we can uniquely form an extended Young diagram, $Y_2$, from these removed boxes.  Next, we consider the boxes between $p_1^L$ and $p_2^L$,  $\mu_{3}$ 0-colored boxes, and the boxes  between $p_1^U$ and $p_2^U$ and use them to form $Y_3$.    We continue this process for the boxes between subsequent paths, until the boxes between $p_{k-1}^L$ and $p_{k-2}^L$, $\mu_{k}$ boxes of color 0, and the boxes between $p_{k-1}^U$ and $p_{k-2}^U$ have been used to form $Y_k$.  Finally, we take the boxes remaining above $p_{k-1}^L$ and $p_{k-1}^U$, together with $\mu_1$ boxes of color 0, to be $Y_1$.   As in \cite[proof of Theorem 4.9]{JM1} and because $\tau \in P_a^\mu, \omega \in P_b^\mu,$ each $Y_i$ is an extended Young diagram and we have $Y_1 \supseteq Y_2 \supseteq \cdots \supseteq Y_k$.  Note that all diagrams in the tuple $\mathbf{Y} = (Y_1, Y_2, \ldots, Y_k)$ collectively consist of exactly the boxes in $Y^\ell_{a,b}$ and so  ${\bf Y}$ has weight $k\Lambda_0 - \lambda^\ell_{a,b}$.  Finally, consider $Y_k$  as a sequence represented by $Y_k = (y_i^{(k)})_{i \geq 0}$.  We write $Y_{k+1} = Y_1[n] = (y_i^{(1)}+n)_{i \geq 0}$.  Since $n \geq 2\ell+2-a-b$ and $y_i^{(1)} \geq -\ell+a-1$ for all $i \geq 0$, we have $y_i^{(1)}+n \geq \ell - b + 1 > 0$ for all $i \geq 0$.  Note that by definition $(Y_k)_i = y_i^{(k)} \leq 0$ for all $i \geq 0$.   Hence $Y_k \supseteq Y_1[n]$ and  for all $i \geq 0$, $(Y_{k+1})_i > (Y_k)_{i+1}$.  Therefore, ${\bf Y} \in B(k\Lambda_0)_{k\Lambda_0-\lambda^\ell_{a,b}}$.
 
Now, let ${\bf Y} = (Y_1, Y_2, \ldots, Y_k)  \in B(k\Lambda_0)_{k\Lambda_0 - \lambda^\ell_{a,b}}$.   Let $\mu = (\mu_1, \mu_2, \ldots, \mu_k) \vdash \ell$, where $\mu_i$ is the number of 0-colored boxes in $Y_i$.   Let $\tau = (\tau_1, \tau_2, \ldots, \tau_k) \vdash \ell-a$ (resp. $\omega = (\omega_1, \omega_2, \ldots, \omega_k) \vdash \ell-b$), where $\tau_i$ (resp. $\omega_i$) is the number of -1-colored (resp. 1-colored) boxes in $Y_i$.  Since each $Y_i$ is an extended Young diagram, we see that $\tau \in P^\mu_a$ (resp. $\omega \in P^\mu_b$).  The weight of ${\bf Y}$ is $k\Lambda_0 - \lambda^\ell_{a,b}$ and so we have the appropriate number of boxes in the appropriate colors to fill the diagram $Y^\ell_{a,b}$ with boxes from ${\bf Y}$.  To do so, we first place $Y_1$ in $Y^\ell_{a,b}$, using gravity to the upper left corner.   Starting at the bottom left and continuing until we reach the -1-diagonal, we draw the path $p_{k-1}^L$ along the outside edge.  Then, starting at the upper right corner of $Y_1$, we draw the path $p_{k-1}^U$ down and left along the outside edge until we reach the 1-diagonal.   Because $Y_1$ is a Young diagram, both of these paths are lattice paths.  Now, we take the boxes in $Y_k$ and place them in $Y^\ell_{a,b}$, again with gravity to the upper left corner.  We draw paths $p_{k-2}^L$ and $p_{k-2}^U$  along the outside edge from the bottom left to the -1-diagonal and from the upper right to the 1-diagonal, respectively.  Because $Y_1$ and $Y_k$ are extended Young diagrams, these paths must be lattice paths.  Next, we take the boxes in $Y_{k-2}$ and draw $p_{k-3}^L$ and $p_{k-3}^U$.  We continue in this manner until we add in the final boxes of $Y_2$ and insert the $\ell$ boxes of color 0 to completely fill in the diagram $Y^\ell_{a,b}$.  Let ${\bf L}$ and ${\bf U}$ be the sequences of lower and upper paths.    As in  \cite[proof of Theorem 4.9]{JM1}, since each $Y_i$ is an extended Young diagram, Definition \ref{pathsdefL}(3) is satisfied for ${\bf L}$, $\mathbf{U}$. In addition, since $Y_1 \supseteq Y_2 \supseteq \cdots \supseteq Y_k$, we see that Definition \ref{pathsdefL}(1) and Definition \ref{pathsdefL}(2) are satisfied.  Thus ${\bf L} \in L^{\ell,k,\tau}_a$ for $\tau \in P_a^\mu$,  ${\bf U} \in U^{\ell,k,\omega}_b$ for $\omega \in P_b^\mu$, and $({\bf L},{\bf U}) \in  \mc{S}^{\ell,k,\mu}_{a,b}$. \end{proof}

\begin{example}
We associate the element $(\mathbf{L}, \mathbf{U})$ of $\mc{S}^{9,5,(3,2,2,1,1)}_{3,2}$ in Example \ref{LUex} with the 5-tuple of extended Young diagrams in Figure \ref{EYDtuple}.  

\begin{figure}[h]
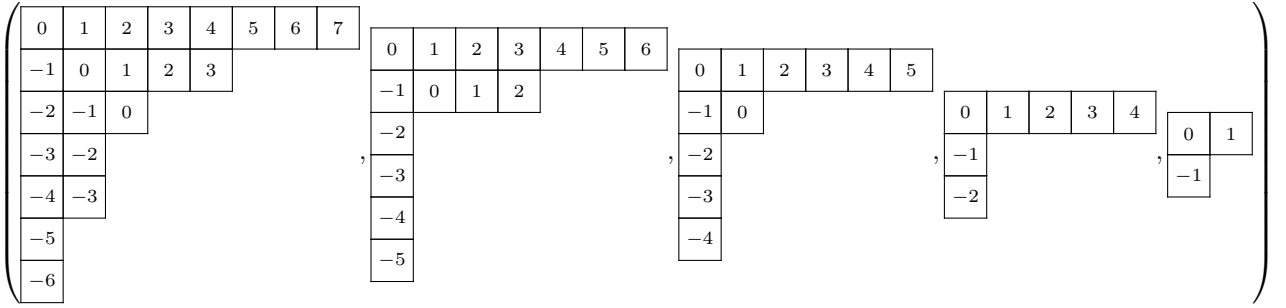

$$\left ( \scriptsize{\tableau{0 & 1 & 2 & 3 & 4 & 5 & 6 & 7 \\ -1 & 0 & 1 & 2 & 3 \\ -2 & -1 &0 \\ -3 & -2   \\ -4 & -3 \\ -5 \\ -6 } , 
\tableau{0 & 1 & 2 & 3 & 4 & 5& 6 \\ -1 & 0 & 1  & 2 \\ -2 \\ -3 \\ -4 \\ -5} , 
\tableau{0 & 1 & 2 & 3  & 4 & 5  \\ -1  &0\\ -2  \\ -3 \\ -4} , 
\tableau{0 & 1 & 2  & 3 & 4 \\ -1 \\ -2    }, 
\tableau{ 0 & 1\\ -1 } }\right )
$$
\caption{Element of $B(5\Lambda_0)_{5\Lambda_0-\lambda^9_{3,2}}$}
\label{EYDtuple}
\end{figure}
\end{example}
\end{section}


\begin{section}{Admissible Sequences of Paths and Standard Young Tableaux}
In this section, we relate the sequences of paths to standard Young tableaux, leading to a formula for the multiplicity of the weights $k\Lambda_0 - \lambda^\ell_{a,b}$.  For a partition $\mu$, we let $T^\mu$ be the set of all standard Young tableaux of shape $\mu$.  We denote $l(\mu)$ to be the number of parts in $\mu$.  

For $\max\{a,b\} \leq \ell \leq  \left \lfloor \frac{n+a+b}{2} \right \rfloor-1$ let $\tau$ be a partition of $\ell-a$ with $\max\{a,b\}  \leq l(\tau) \leq k$.  Similar to \cite[Section 2]{JM2}, we define a map $f \colon T^\tau \to L^{\ell,k,\tau}_a$ as follows.  Let $X \in T^\tau$ and consider the diagram $Y^\ell_{a,b}$.  We draw a sequence of lower paths $f(X)$ with right ($R$) and up ($U$) moves as follows.  For $1 \leq i \leq k-1$, $1 \leq j \leq \ell-a$,  let $m_{i,j}$ denote the $j^{th}$ move in $p_i^L$.  Then we define 
$$m_{i,j} = \begin{cases}
U, & \text{ if $j$ is in row 2 through $(i+1)$ of $X$} \\
R, & \text{ otherwise} \\
\end{cases}.
$$ 
Next, we define a map $g \colon L^{\ell,k,\tau}_a\to T^\tau$ as follows.  For ${\bf L} = \{ p_1^L, p_2^L, \ldots, p_{k-1}^L \} \in L^{\ell,k,\tau}_a$, define $g( {\bf L})$ by first drawing a Young diagram $X$ with shape $\tau= (t_0^{-1}, t_1^{-1}, \ldots, t_{k-1}^{-1})$.  Now we fill in the boxes of $X$ with numbers $1, 2, \ldots , \ell-a$ by first traversing the first $\ell-a$ moves from $(\ell-a+1,0)$ to the $-1$-diagonal in each path $p_1^L, p_2^L, \ldots, p_{k-1}^L$ in order.  If the $j^{th}$ move $m_{i,j} = R$ for all $1 \leq i \leq k-1$, then we place the number $j$ in the leftmost available box in the first row of $X$.  If $m_{i,j} = U$ for any $1 \leq i \leq k-1$, then choose $s$ to be the smallest integer such that $m_{s,j} = U$. In such case, we place the number $j$ in the leftmost available box in row $s+1$ of $X$.

Replacing $\ell$ by $\ell-a$ in the definition of admissible sequence of paths and using the same argument in the proof of  \cite[Theorem 2.3]{JM2}, $f(X) \in L_a^{\ell,k,\tau}$, $g(\mathbf{L}) \in T^\tau$ and these functions are inverses of each other.  Thus, the following lemma holds.   

\begin{lemma}\label{bijL} For $\max\{a,b\} \leq \ell \leq  \left \lfloor \frac{n+a+b}{2} \right \rfloor-1$ let $\tau$ be a partition of $\ell-a$ with $\max\{a,b\}  \leq l(\tau) \leq k$.  Then $|T^\tau| = |L^{\ell,k,\tau}_a|$.  
\end{lemma}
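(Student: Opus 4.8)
The goal is to show the two maps $f\colon T^\tau \to L^{\ell,k,\tau}_a$ and $g\colon L^{\ell,k,\tau}_a \to T^\tau$ just defined are well defined and mutually inverse; then $f$ is a bijection and $|T^\tau| = |L^{\ell,k,\tau}_a|$ follows at once. The primary route, indicated by the remark preceding Theorem \ref{mult_admiss}, is reduction to the square case already proved in \cite[Theorem 2.3]{JM2}. That remark identifies an admissible lower sequence in $Y^\ell_{a,b}$ with an admissible sequence of $k-1$ paths (the first $\ell-a$ moves of each) in the $(\ell-a+1)\times(\ell-a+1)$ square. Under this identification the recorded shape $\tau = (t_0^{-1},\dots,t_{k-1}^{-1})$ is unchanged, and $f,g$ are literally the constructions of \cite{JM2} with $\ell$ replaced by $\ell-a$. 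So I would first make that identification precise and then quote \cite[Theorem 2.3]{JM2} verbatim.

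For a self-contained argument I would verify $f$ and $g$ directly. Given $X\in T^\tau$, the rule that $m_{i,j}=U$ precisely when $j$ lies in rows $2$ through $i+1$ of $X$ makes the Up-moves grow monotonically in $i$ for each fixed $j$, so they are nested across $p_1^L,\dots,p_{k-1}^L$. The Up-moves of $p_1^L$ are exactly the entries of row $2$ of $X$, and the bound on this row forces Definition \ref{pathsdefL}(1); column-strictness of $X$ gives the nesting $t_i^{-1}\le t_{i-1}^{-1}$ together with the upper bounds of Definition \ref{pathsdefL}(2); and the cross-color inequalities Definition \ref{pathsdefL}(3) hold automatically for monotone lattice paths under the diagonal coloring, exactly as in \cite{JM1,JM2}. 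For $g$, reading the indices $1,\dots,\ell-a$ in order and sending $j$ to row $1$ when every $m_{i,j}=R$, and to row $s+1$ for the smallest $s$ with $m_{s,j}=U$ otherwise, one checks that rows increase (indices are read in order) and columns increase (admissibility), so $g(\mathbf{L})\in T^\tau$.

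Finally I would check the two compositions by tracing a single index $j$. If $X$ puts $j$ in row $r$, then $f$ sets $m_{i,j}=U$ exactly for $i\ge r-1$, whence $g$ records the smallest such $i$ as $s=r-1$ and returns $j$ to row $s+1=r$, so $g\circ f=\mathrm{id}$. Conversely, the nesting of Up-moves guaranteed by admissibility means an admissible $\mathbf{L}$ is completely recovered from the row into which $g$ places each $j$, giving $f\circ g=\mathrm{id}$. I expect the combinatorial core to be routine; the \emph{main obstacle} is the bookkeeping forced by the deletion parameter $a$, namely confirming that removing the bottom $a-1$ rows to the left of the $0$-diagonal shifts the starting point $(0,-\ell+a-1)$ and the diagonal $y=x-(\ell-a+1)$ so that Definition \ref{pathsdefL}(1)--(3) translate precisely into the $(\ell-a+1)$-square conditions of \cite{JM2}, with no admissible sequence gained or lost. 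Once that translation is pinned down, the equality $|T^\tau|=|L^{\ell,k,\tau}_a|$ is immediate.
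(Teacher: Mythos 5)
Your proposal is correct and takes essentially the same route as the paper: the paper's proof also consists of defining $f$ and $g$, replacing $\ell$ by $\ell-a$ in the definition of admissible sequences, and invoking the argument of \cite[Theorem 2.3]{JM2} to conclude that $f$ and $g$ are well-defined and mutually inverse. Your additional self-contained verification (tracing where each index $j$ lands under $g\circ f$ and $f\circ g$) simply spells out the details that the paper delegates to \cite{JM2}.
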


By a similar argument, replacing right moves with down moves and up moves with left moves, the following lemma holds for upper paths.  

\begin{lemma}\label{bijU} For $\max\{a,b\} \leq \ell \leq  \left \lfloor \frac{n+a+b}{2} \right \rfloor-1$ let $\omega$ be a partition of $\ell-b$ with $\max\{a,b\}  \leq l(\omega) \leq k$.  Then $|T^\omega| = |U^{\ell,k,\omega}_b|$.  
\end{lemma}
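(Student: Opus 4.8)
The plan is to prove Lemma~\ref{bijU} by exhibiting an explicit bijection between $T^\omega$ and $U^{\ell,k,\omega}_b$ that mirrors the pair of maps $f,g$ constructed for the lower paths. The guiding principle is the reflective symmetry of the setup: the upper paths on $Y^\ell_{a,b}$ are governed by the same combinatorics as the lower paths once one interchanges right moves with down moves, up moves with left moves, the parameter $a$ with $b$, and each negative color $j$ with $-j>0$. Under this dictionary the upper-path clauses of Definition~\ref{pathsdefL}, namely the inequality for $j\geq 1$ and the condition $t_i^j\leq t_i^{j-1}$ for $j>1$, are term-by-term the images of the corresponding lower-path clauses (here $|j|=j$ since $j\geq 1$, and $j\mapsto -j$ sends $t_i^j\leq t_i^{j+1}$ for $j<-1$ to $t_i^{j}\leq t_i^{j-1}$ for $j>1$). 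Hence the argument of \cite[Theorem~2.3]{JM2} should transport without essential change.

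Concretely, I would first define $f'\colon T^\omega\to U^{\ell,k,\omega}_b$ by letting $m_{i,j}$ denote the $j$-th move of $p_i^U$ for $1\leq i\leq k-1$, $1\leq j\leq \ell-b$, and setting
$$
m_{i,j}=\begin{cases} L, & \text{if }j\text{ lies in rows }2\text{ through }(i+1)\text{ of }X,\\ D, & \text{otherwise.}\end{cases}
$$
Conversely I would define $g'\colon U^{\ell,k,\omega}_b\to T^\omega$ by drawing the Young diagram of shape $\omega=(t_0^1,t_1^1,\dots,t_{k-1}^1)$ and filling its boxes with $1,\dots,\ell-b$: traversing the first $\ell-b$ moves of $p_1^U,\dots,p_{k-1}^U$ in order, if the $j$-th move is $D$ in every path then $j$ is placed in the leftmost available box of row $1$, and if it equals $L$ in some path one takes $s$ to be the smallest index with $m_{s,j}=L$ and places $j$ in the leftmost available box of row $s+1$.

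To finish, I would verify that $f'(X)$ satisfies the three admissibility conditions of Definition~\ref{pathsdefL} for upper paths, that $g'(\mathbf{U})$ is a standard Young tableau, and that $f',g'$ are mutually inverse; each of these follows from the lower-path argument in \cite[Theorem~2.3]{JM2} under the substitution $\ell\mapsto\ell-b$ and the reflection above, with the nesting of the paths giving the strict increase down columns (Definition~\ref{pathsdefL}(3)) and the monotonicity within each path giving the strict increase along rows (Definition~\ref{pathsdefL}(2)). The point that most deserves care is that $Y^\ell_{a,b}$ is itself \emph{not} symmetric under the reflection when $a\neq b$, since the rows and columns deleted from $Y^\ell$ differ; what makes the argument go through is that the upper-path construction refers only to $b$ and to boxes of positive color, so the asymmetry of the diagram is invisible to it and the transported proof applies verbatim. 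The resulting bijection gives $|T^\omega|=|U^{\ell,k,\omega}_b|$.
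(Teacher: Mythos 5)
Your proposal is correct and follows exactly the paper's approach: the paper proves Lemma~\ref{bijU} by the one-line observation that the argument for Lemma~\ref{bijL} carries over upon replacing right moves with down moves and up moves with left moves (i.e., the maps $f$, $g$ transported through the reflection, with $\ell-a$ replaced by $\ell-b$). Your additional remark that the asymmetry of $Y^\ell_{a,b}$ for $a\neq b$ is harmless because the upper-path conditions refer only to $b$ and to positive colors is a worthwhile clarification, but it is the same argument the paper intends.
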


\begin{example}  Let $k = 4$, $\ell = 10$, $a = 3$, $b=1$.  
 We begin with the standard Young tableau in Figure \ref{exfL}(a) and applying $f$, we list the moves in $p_1^L, p_2^L, p_3^L$ in Figure \ref{exfL}(b) and draw them in Figure \ref{exfL}(c).  
\begin{figure}[h]
\begin{centering}
\begin{multicols}{3}

$\scalebox{.9}{\tableau{1 & 2 & 6 \\ 3 & 5  \\ 4  \\ 7}}$

\scalebox{.8}{
\begin{tabular}{llllllll} 
\hline\noalign{\smallskip}
 & \multicolumn{7}{c} {Move} \\
 \noalign{\smallskip}\hline\noalign{\smallskip}
Path & 1 & 2 & 3 & 4 & 5 & 6 & 7  \\ 
\noalign{\smallskip}\hline\noalign{\smallskip}
1 & $R$ & $R$ & $U$ & $R$ & $U$ & $R$ & $R$ \\
2 & $R$ & $R$ & $U$ & $U$ & $U$ & $R$ & $R$ \\ 
3 & $R$ & $R$ & $U$ & $U$ & $U$ & $R$ & $U$ \\ 
\noalign{\smallskip}\hline
\end{tabular} }

\includegraphics[scale=.55]{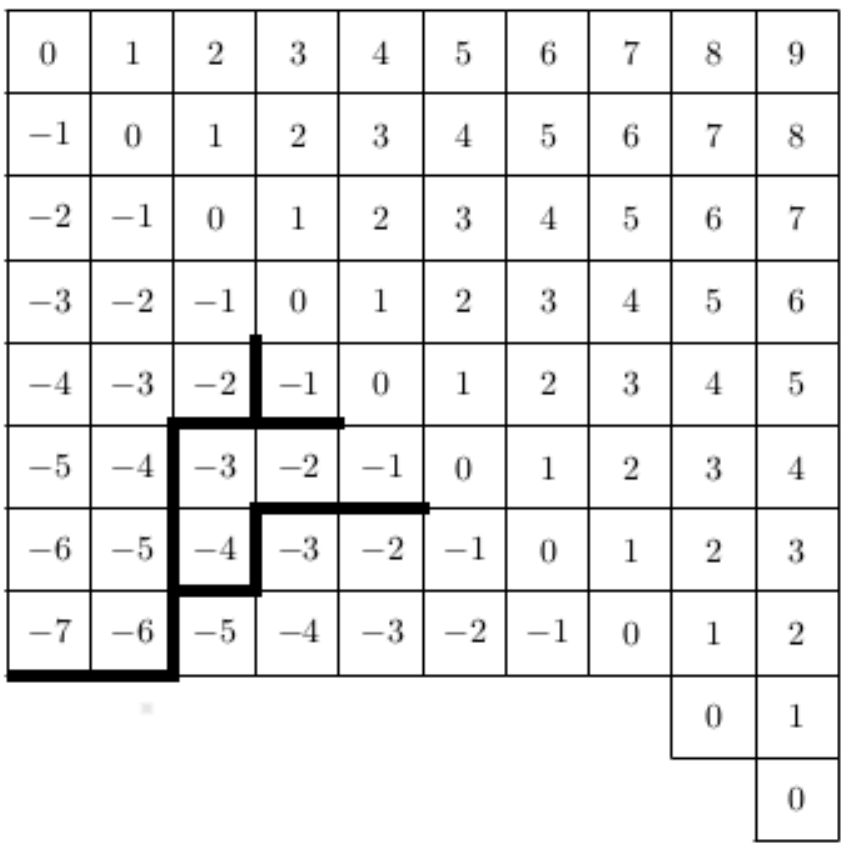}
\end{multicols}

\begin{multicols}{3}
(a)

(b)

(c)

\end{multicols}
\end{centering}
\caption{Example of the map $f$}
\label{exfL}
\end{figure}
\end{example}

\begin{example}\label{running_g} 
Applying $g$ to $\mathbf{L}$ in Example \ref{LUex}(a), we see that $\mathbf{L}$ corresponds to the standard Young tableau in Figure \ref{exg}(a). Similarly, $\mathbf{U}$ from Example \ref{LUex}(b) corresponds with the standard Young tableau in Figure \ref{exg}(b).  

\begin{figure}[h]
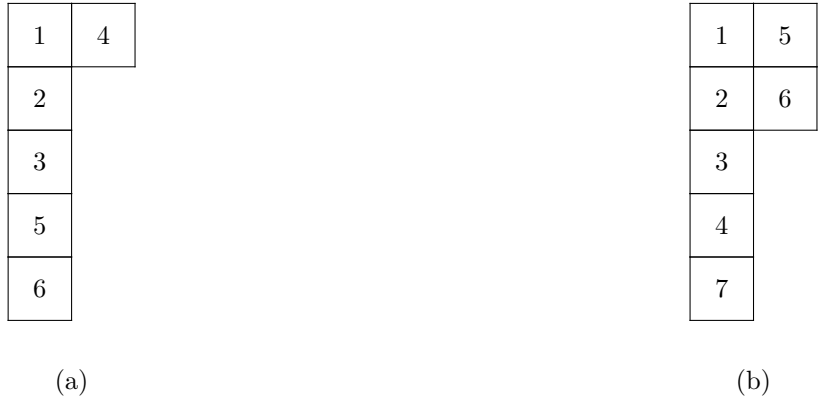

\begin{centering}
\begin{multicols}{2}

$ \tableau{1 & 4 \\ 2  \\ 3\\ 5 \\ 6}$

$\tableau{1 & 5 \\ 2 & 6 \\ 3 \\ 4\\7}$

\end{multicols}

\begin{multicols}{2}
(a)

(b)

\end{multicols}
\end{centering}

\caption{Example of the map $g$}
\label{exg}
\end{figure}
\end{example}

In the following theorems, we note that the multiplicity of $k\Lambda_0-\lambda_{a,b}^\ell$ can be given by the number of ordered pairs of certain standard Young tableaux and hence we obtain a formula for its multiplicity.  

\begin{theorem} \label{formulaLemma} Let $k \geq a+b$, $\max\{a,b\} \leq \ell \leq  \left \lfloor \frac{n+a+b}{2} \right \rfloor-1 $. Then we have 
$$\text{mult}_{V(k\Lambda_0)}(k\Lambda_0 - \lambda_{a,b}^\ell) = \left |\bigcup_{\mu\vdash\ell, \max\{a,b\} \leq  l(\mu) \leq k} \left ( \bigcup_{\scriptsize{\stackunder{$V \in T^\tau (\tau \in P^\mu_a),$}{$W \in T^\omega (\omega \in P^\mu_b)$}}} (V, W)  \right ) \right | .$$
\end{theorem}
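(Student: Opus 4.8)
The plan is to deduce this formula directly from Theorem \ref{mult_admiss} by transporting admissible sequences of paths to standard Young tableaux through the bijections of Lemmas \ref{bijL} and \ref{bijU}. Since Theorem \ref{mult_admiss} already identifies $\text{mult}_{V(k\Lambda_0)}(k\Lambda_0 - \lambda_{a,b}^\ell)$ with the cardinality of $\bigcup_{\mu} \mc{S}^{\ell,k,\mu}_{a,b}$, it suffices to exhibit a bijection between this union and the union of pairs $(V,W)$ appearing on the right-hand side.

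First I would recall that the map $g$ of Lemma \ref{bijL} is a bijection $L^{\ell,k,\tau}_a \to T^\tau$ with inverse $f$, and that the map built in Lemma \ref{bijU} (replacing right/up moves by down/left moves), call it $g'$, is a bijection $U^{\ell,k,\omega}_b \to T^\omega$ with inverse $f'$. The structural point I would stress is that neither map refers to the ambient partition $\mu$: the type $\tau$ of an admissible lower sequence $\mathbf{L}$ and the type $\omega$ of an admissible upper sequence $\mathbf{U}$ are determined by $\mathbf{L}$ and $\mathbf{U}$ themselves, just as the shape of a standard Young tableau is intrinsic to the tableau. Consequently $g$ and $g'$ glue into globally defined, type-preserving bijections on $\bigsqcup_\tau L^{\ell,k,\tau}_a$ and $\bigsqcup_\omega U^{\ell,k,\omega}_b$.

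I would then set $\Phi(\mathbf{L},\mathbf{U}) = (g(\mathbf{L}), g'(\mathbf{U}))$ and verify that it is the required bijection. If $(\mathbf{L},\mathbf{U}) \in \mc{S}^{\ell,k,\mu}_{a,b}$, then $\mathbf{L}$ has type $\tau \in P^\mu_a$ and $\mathbf{U}$ has type $\omega \in P^\mu_b$; since $g$ and $g'$ preserve type, $g(\mathbf{L}) \in T^\tau$ and $g'(\mathbf{U}) \in T^\omega$ with the very same $\tau \in P^\mu_a$ and $\omega \in P^\mu_b$, so the image lands in the $\mu$-indexed piece of the right-hand side. The inverse $(V,W) \mapsto (f(V), f'(W))$ is checked identically, using that $\tau \in P^\mu_a$ and $\omega \in P^\mu_b$ are precisely the conditions defining $\mc{S}^{\ell,k,\mu}_{a,b}$. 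Injectivity is inherited coordinatewise from $g$ and $g'$.

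The main thing to handle with care — and the only real obstacle — is the bookkeeping of the nested unions: one must confirm that $\Phi$ preserves the indexing by $\mu$, so that elements are never shuffled between different $\mu$-pieces, and that membership in both the source and target is governed solely by the types and shapes $\tau,\omega$. Once it is observed that $g$ and $g'$ depend only on the type and not on $\mu$, this compatibility is automatic and $\Phi$ is a bijection of the two unions regardless of whether the outer union over $\mu$ is read set-theoretically or as a disjoint union. Combining this bijection with Theorem \ref{mult_admiss} then yields the stated equality of cardinalities.
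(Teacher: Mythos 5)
Your proposal is correct and follows essentially the same route as the paper: the paper's proof likewise combines Theorem \ref{mult_admiss} with the bijections $f,g$ (and their upper-path analogues) underlying Lemmas \ref{bijL} and \ref{bijU} to identify each pair $(\mathbf{L},\mathbf{U}) \in \mc{S}^{\ell,k,\mu}_{a,b}$ with a pair $(V,W)$ of tableaux of shapes $\tau \in P^\mu_a$, $\omega \in P^\mu_b$. Your explicit attention to type-preservation and to the compatibility of the correspondence with the union over $\mu$ merely spells out details the paper leaves implicit.
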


\begin{proof}
For $\mu \vdash \ell, \max\{a,b\} \leq l(\mu) \leq k$, recall that $\mc{S}^{\ell,k,\mu}_{a,b}$ is the set of  all $({\bf L},{\bf U})$, where ${\bf L} \in L^{\ell,k,\tau}_a$ for some $\tau \in P^\mu_a$ and ${\bf U} \in U^{\ell,k,\omega}_b$ for $\omega \in P^\mu_b$. By Lemma \ref{bijL} and Lemma \ref{bijU}, each such ordered pair $({\bf L},{\bf U})$ is in one-to-one correspondence with an ordered pair $(V, W)$ of standard Young tableau where $V \in T^\tau$ for some $\tau \in P^\mu_a$ and $W \in T^\omega$ for some $\omega \in P^\mu_b$.  By Theorem \ref{mult_admiss}, the number of such ordered pairs gives the multiplicity of $k\Lambda_0 - \lambda^\ell_{a,b}$.  \end{proof}

For a partition $\mu$, we let $f^\mu$ denote the number of standard Young tableaux of shape $\mu$ and recall that this can be computed using the Frame-Robinson-Thrall hook-length formula \cite{FRT}. Note that $|T^\mu| = f^\mu$.  Hence the following result is an immediate consequence.

\begin{theorem} \label{formulaTh} For $k \geq a+b$, $\max\{a,b\} \leq \ell \leq  \left \lfloor \frac{n+a+b}{2} \right \rfloor-1 $, we have 
$$\text{mult}_{V(k\Lambda_0)}(k\Lambda_0 - \lambda_{a,b}^\ell) = \displaystyle \sum_{\mu \vdash \ell, \max\{a,b\} \leq l(\mu) \leq k} \left (\sum_{\tau \in P^\mu_a} f^{\tau} \right) \cdot \left (\sum_{\omega \in P^\mu_b} f^{\omega} \right). $$
\end{theorem}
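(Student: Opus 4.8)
The plan is to deduce Theorem \ref{formulaTh} directly from Theorem \ref{formulaLemma}, which already expresses the multiplicity as the cardinality of a union, over partitions $\mu \vdash \ell$ with $\max\{a,b\} \leq l(\mu) \leq k$, of sets of ordered pairs $(V,W)$ of standard Young tableaux, where $V \in T^\tau$ for some $\tau \in P^\mu_a$ and $W \in T^\omega$ for some $\omega \in P^\mu_b$. The entire task is then to evaluate this cardinality and rewrite it as the stated double sum of products of $f^\tau$ and $f^\omega$. I would organize the evaluation in three steps: disjointness of the union over $\mu$, the Cartesian-product structure of each block, and the passage from $|T^\mu|$ to $f^\mu$.

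The step I expect to be the main obstacle is the disjointness of the union over $\mu$, and this is where I would be most careful. It can happen that two distinct partitions $\mu, \mu'$ share a common $\tau \in P^\mu_a \cap P^{\mu'}_a$ together with a common $\omega \in P^\mu_b \cap P^{\mu'}_b$, in which case the associated pair $(V,W)$ of tableaux is literally the same object for both; nevertheless these two contributions must be counted separately, so the relevant union is not a naive set union but must be read as disjoint, indexed by $\mu$. I would justify this through the underlying bijection of Theorem \ref{mult_admiss}: its inverse sends each tuple $\mathbf{Y} \in B(k\Lambda_0)_{k\Lambda_0 - \lambda^\ell_{a,b}}$ to the data consisting of $\mu$ (the sequence of numbers of $0$-colored boxes in the components $Y_i$) together with $\mathbf{L}$ and $\mathbf{U}$, and since $\mathbf{Y}$ determines $\mu$, the blocks $\mc{S}^{\ell,k,\mu}_{a,b}$ attached to distinct $\mu$ label genuinely disjoint families of counted objects. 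Consequently the cardinality of the union equals $\sum_\mu |\mc{S}^{\ell,k,\mu}_{a,b}|$, with each summand the size of one block.

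With disjointness in hand the remainder is routine bookkeeping. For a fixed $\mu$, the block is a Cartesian product: by Theorem \ref{formulaLemma} the first coordinate $V$ ranges over all of $\bigcup_{\tau \in P^\mu_a} T^\tau$ and, independently, the second coordinate $W$ ranges over all of $\bigcup_{\omega \in P^\mu_b} T^\omega$, so the block size is the product $\big|\bigcup_{\tau \in P^\mu_a} T^\tau\big| \cdot \big|\bigcup_{\omega \in P^\mu_b} T^\omega\big|$. Since standard Young tableaux of distinct shapes are distinct objects, the sets $T^\tau$ for $\tau \in P^\mu_a$ are pairwise disjoint, whence $\big|\bigcup_{\tau \in P^\mu_a} T^\tau\big| = \sum_{\tau \in P^\mu_a} |T^\tau| = \sum_{\tau \in P^\mu_a} f^\tau$ using $|T^\mu| = f^\mu$, and likewise for the $\omega$-factor. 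Summing the block sizes over $\mu$ then yields
$$\text{mult}_{V(k\Lambda_0)}(k\Lambda_0 - \lambda_{a,b}^\ell) = \sum_{\mu \vdash \ell,\, \max\{a,b\} \leq l(\mu) \leq k} \Big(\sum_{\tau \in P^\mu_a} f^\tau\Big)\Big(\sum_{\omega \in P^\mu_b} f^\omega\Big),$$
which is the assertion. The Frame-Robinson-Thrall hook-length formula plays no role in establishing this identity; it enters only afterward as a concrete tool for evaluating the individual numbers $f^\tau$ and $f^\omega$.
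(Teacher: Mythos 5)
Your proof is correct and follows essentially the same route as the paper's, whose entire proof of this theorem is the one-line count that for each $\mu$ there are $\sum_{\tau \in P^\mu_a} f^{\tau}$ choices for $V$ and $\sum_{\omega \in P^\mu_b} f^{\omega}$ choices for $W$, implicitly treating the union in Theorem \ref{formulaLemma} as a disjoint union indexed by $\mu$ and using that tableaux of distinct shapes are distinct. Your explicit justification of that disjointness via the bijection of Theorem \ref{mult_admiss} (the crystal element $\mathbf{Y}$ determines $\mu$ as the numbers of $0$-colored boxes, so blocks attached to distinct $\mu$ label disjoint families of counted objects even when they share a pair of shapes $(\tau,\omega)$) is exactly the intended reading and fills in a subtlety the paper's notation leaves implicit, since a naive set-union interpretation really would undercount.
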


\begin{proof} For a given partition $\mu \vdash \ell$ in Lemma \ref{formulaLemma}, there are  $\ds \sum_{\tau \in P^\mu_a} f^{\tau}$ choices for $V$ and  $\ds \sum_{\omega \in P^\mu_b} f^{\omega}$choices for $W$.  \end{proof}

\begin{example}
Consider the weight $5\Lambda_0-\lambda_{3,1}^6$ in $V(5\Lambda_0)$.  There are  six partitions of $\ell = 6$ with between 3 and 5 parts:   $(2,2,2),(3,2,1),(4,1,1), (2,2,1,1), (3,1,1,1), (2,1,1,1,1)$.  Consider, for example,  $(2,2,1,1)$.  Then $P_3^{(2,2,1,1)} = \{(2,1), (1,1,1)\}$ and $P_1^{(2,2,1,1)} = \{(2,1,1,1), (2,2,1)\}$.  Using the hook-length formula, we calculate $f^{(2,1)} =2 , f^{(1,1,1)} = 1, f^{(2,1,1,1)} = 4 , f^{(2,2,1)}=5$ and so $ \left (\sum_{\tau \in P^{(2,2,1,1)}_3} f^{\tau} \right) \cdot \left (\sum_{\omega \in P^{(2,2,1,1)}_1} f^{\omega} \right) = 3\cdot9=27$.  We do the same for the other partitions of $\ell=6$ and obtain:
$$\text{mult}_{V(5\Lambda_0)}(5\Lambda_0 - \lambda_{3,1}^6) = (1)(5) + (2)(5+6+5) + (1)(6+4) + (2+1)(4+5) + (1+2)(4+6) + (1+2)(1+4)=119$$
\end{example}

\end{section}


\begin{section}{Multiplicity and Avoiding Permutations}

In this section, we will show that the multiplicity of the weights $k\Lambda_0 - \lambda^\ell_{a,b}$ can be counted by certain avoiding permutations.  

First, for completeness we recall a sliding operation developed by Sch{\" u}tzenberger.  This procedure is often called the jeu de taquin, after a game with sliding pieces (c.f. \cite{F}).    We begin with a skew Young tableau, say $\mu \setminus \lambda$.  An inside corner is a box in the deleted diagram ($\lambda$) such that the boxes immediately to its right and below it are not in the deleted diagram.   An outside corner is a box in the portion of the diagram that has not been deleted that does not have a box below it or to its right.  The procedure is to take an inside corner (called a hole) and look at the neighbors immediately to the right of the hole and immediately below the hole.  We slide whichever is smaller into the hole, thus moving the hole.  If there is only one right or below neighbor, we slide that box into the hole. For the new position of the hole, we again look at neighbors to the right and below and slide accordingly until we are at an outside corner, at which time we consider the hole to be removed.  Once this sliding procedure is performed for each inside corner, the result is known to be a tableau, called the rectification of the skew diagram.   The rectification of a skew diagram into a  tableau is known to be reversible (c.f. \cite{F}), given that we know which boxes were removed and from which outside corner.  

\begin{example}  We show the rectification of ${\scriptsize \tableau{1 & 3 & 6 & 9 \\ 2 & 4 & 8 \\ 5 & 7 & 10}}  \setminus  {\scriptsize \tableau{1 \\ 2}}$ in Figure \ref{rectex}.  After the rectification is complete, we subtract two from each entry to obtain a standard Young tableaux with 10-2 = 8 boxes (shown in last step).  Observe that the box that contained a 2 in the deleted diagram is removed in row 3 and the box that contained a 1 in the deleted diagram is removed from row 2. 

\begin{figure}[h]
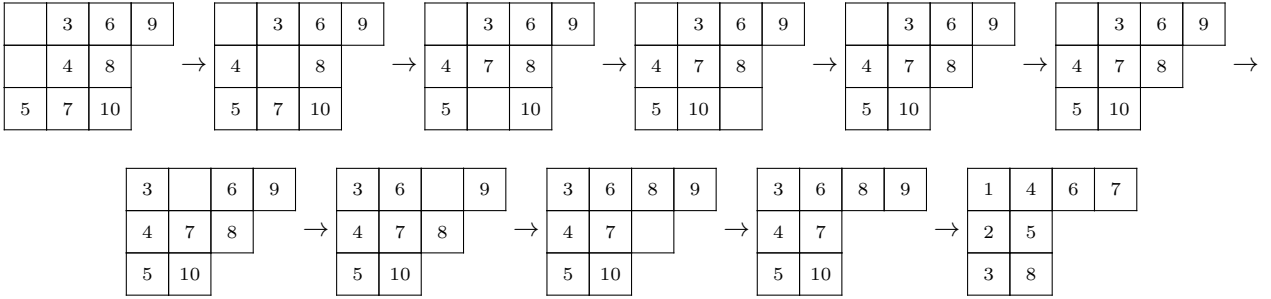

$$ {\scriptsize \tableau{ & 3 & 6 & 9 \\  & 4 & 8 \\ 5 & 7 & 10}} \to 
{\scriptsize \tableau{ & 3 & 6 & 9 \\  4&  & 8 \\ 5 & 7 & 10}} \to
{\scriptsize \tableau{ & 3 & 6 & 9 \\  4&7  & 8 \\ 5 &   & 10}} \to 
{\scriptsize \tableau{ & 3 & 6 & 9 \\  4&7  & 8 \\ 5 &   10 & }} \to 
{\scriptsize \tableau{ & 3 & 6 & 9 \\  4&7  & 8 \\ 5 &   10  }} \to 
{\scriptsize \tableau{ & 3 & 6 & 9 \\  4&7  & 8 \\ 5 &   10  }} \to $$

$${\scriptsize \tableau{3 &  & 6 & 9 \\  4&7  & 8 \\ 5 &   10  }} \to 
{\scriptsize \tableau{3 & 6 &  & 9 \\  4&7  & 8 \\ 5 &   10  }} \to
{\scriptsize \tableau{3 & 6 & 8 & 9 \\  4&7  &  \\ 5 &   10  }} \to
{\scriptsize \tableau{3 & 6 & 8 & 9 \\  4&7    \\ 5 &   10  }} \to
{\scriptsize \tableau{1 & 4 & 6 & 7 \\  2&5    \\ 3 &   8  }}
$$

\caption{Example of rectification.}
\label{rectex}
\end{figure}

\end{example}

Let $\mu \vdash \ell$, with $l(\mu) > a$ and define $Q_a^\mu$ be the set of standard Young tableau of shape $\mu$ such that the first column includes entries 1 through $a$.  For $M \in Q_a^\mu$, remove the boxes with entries 1 through $a$, and perform the process of rectification.  Because all the boxes in the deleted diagram are in the first column, we will remove the holes from bottom to top. The process ensures that each hole is removed from a row higher than the previous box was removed.  We subtract $a$ from each box in the rectification and obtain a standard Young tableau in $T^\tau$ for some $\tau \in P_a^\mu$.  Since we can compare $\tau$ and $\mu$ to see which rows had boxes removed, the process is reversible.  Hence using Theorem \ref{formulaLemma}, we obtain the following result.

\begin{theorem}\label{pairsforRSKLemma}
Let $k \geq a+b$, $\max\{a,b\} \leq \ell \leq  \left \lfloor \frac{n+a+b}{2} \right \rfloor-1 $. Then we have 
$$\text{mult}_{V(k\Lambda_0)}(k\Lambda_0 - \lambda_{a,b}^\ell) = \left |\bigcup_{\mu\vdash\ell, \max\{a,b\} \leq  l(\mu) \leq k} \left (  \bigcup_{ M \in Q_a^\mu, N \in Q_b^\mu} (M,N)  \right ) \right | .$$
\end{theorem}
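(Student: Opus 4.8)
The plan is to upgrade Theorem \ref{formulaLemma}, which already expresses $\text{mult}_{V(k\Lambda_0)}(k\Lambda_0-\lambda_{a,b}^\ell)$ as the number of pairs $(V,W)$ with $V\in T^\tau$ for some $\tau\in P_a^\mu$ and $W\in T^\omega$ for some $\omega\in P_b^\mu$ (the whole thing unioned over admissible $\mu$), into a count of pairs $(M,N)$ of straight-shape tableaux with $M\in Q_a^\mu$ and $N\in Q_b^\mu$. Since the three ingredients $\mu$, the $a$-factor, and the $b$-factor vary independently, it suffices to produce, for each $\mu$ occurring in Theorem \ref{formulaLemma} and each $c\in\{a,b\}$, a bijection
\[
\Phi_c\colon Q_c^\mu \;\longrightarrow\; \bigsqcup_{\rho\in P_c^\mu} T^\rho .
\]
Granting this, $(M,N)\mapsto(\Phi_a(M),\Phi_b(N))$ is a shape-$\mu$-preserving bijection from $\{(M,N): M\in Q_a^\mu,\,N\in Q_b^\mu\}$ onto the corresponding index set of Theorem \ref{formulaLemma} for that $\mu$; taking the union over all $\mu$ with $\max\{a,b\}\le l(\mu)\le k$ and quoting Theorem \ref{formulaLemma} then yields the claimed formula.

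It remains to construct $\Phi_a$; the map $\Phi_b$ is handled identically, replacing $a$ by $b$. This is precisely the rectification procedure set up in the construction preceding the statement. Given $M\in Q_a^\mu$, the entries $1,\dots,a$ are the $a$ smallest and lie in the increasing first column, so they occupy rows $1,\dots,a$ of that column; deleting them and subtracting $a$ from every remaining entry yields a standard skew tableau of shape $\mu/(1^a)$, and $\Phi_a(M)$ is its jeu de taquin rectification. By Sch{\"u}tzenberger's theorem the rectification is a well-defined standard tableau, independent of the order in which the holes are slid out, so $\Phi_a(M)\in T^\tau$ for a single straight shape $\tau$ with $|\tau|=\ell-a$. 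The two assertions that must still be checked are that $\tau\in P_a^\mu$ and that $\Phi_a$ is a bijection.

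These two assertions are the crux, and verifying them is the step I expect to be the main obstacle. My preferred route is enumerative: deleting and restoring the first-column entries $1,\dots,a$ is itself a bijection between $Q_a^\mu$ and standard skew tableaux of shape $\mu/(1^a)$, and a classical property of jeu de taquin says that, for a fixed $V\in T^\rho$, the number of standard skew tableaux of shape $\mu/(1^a)$ that rectify to $V$ equals the Littlewood--Richardson coefficient $c^\mu_{(1^a)\,\rho}$. By the Pieri rule for multiplication by $s_{(1^a)}$, this coefficient is $1$ when $\mu/\rho$ is a vertical strip of size $a$ and $0$ otherwise, that is, precisely when $\rho\in P_a^\mu$. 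Hence rectification meets each $V\in T^\rho$ with $\rho\in P_a^\mu$ exactly once and misses all other shapes, which simultaneously forces $\tau\in P_a^\mu$ and shows $\Phi_a$ is a bijection. A more self-contained alternative, matching the preceding paragraph, is to slide the $a$ holes out from bottom to top and prove by induction that each hole exits in a strictly higher row than the previous one; the $a$ vacated cells then form a vertical strip, giving $\tau\in P_a^\mu$, while invertibility follows by reading off from $\mu/\tau$ which rows were vacated and performing reverse slides that drive the holes back to the first column, where they are refilled by $1,\dots,a$. I would present the Pieri-rule argument in the text and relegate the explicit monotonicity-of-exit-rows lemma to a remark.
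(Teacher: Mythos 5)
Your proposal is correct, and its core construction coincides with the paper's: both reduce to Theorem \ref{formulaLemma} and realize, for each fixed $\mu$, a bijection from $Q_c^\mu$ ($c\in\{a,b\}$) onto $\bigsqcup_{\rho\in P_c^\mu}T^\rho$ by deleting the first-column entries $1,\dots,c$, rectifying the resulting skew tableau of shape $\mu/(1^c)$ by jeu de taquin, and subtracting $c$ from each entry. Where you genuinely diverge is in how the two crucial facts are certified (that the rectified shape lies in $P_c^\mu$, and that the map is bijective). The paper argues directly on the slides: removing the holes from bottom to top, each hole exits in a strictly higher row than the previous one, so the vacated cells of $\mu$ form a vertical strip, and the process is declared reversible because comparing $\mu$ with the rectified shape records which rows lost a box; both the exit-row monotonicity and the reversibility are asserted rather than proved in detail. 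Your primary route instead quotes two classical results: the number of standard skew tableaux of shape $\mu/(1^c)$ rectifying to a fixed $V\in T^\rho$ equals the Littlewood--Richardson coefficient $c^\mu_{(1^c)\rho}$, and by the Pieri rule this coefficient is $1$ exactly when $\mu/\rho$ is a vertical strip of size $c$ (equivalently $\rho\in P_c^\mu$) and $0$ otherwise; this single identity delivers the shape constraint, injectivity, and surjectivity simultaneously, at the cost of invoking Littlewood--Richardson machinery (available in \cite{F}, which the paper already cites), while your fallback monotonicity-of-exit-rows lemma is precisely the paper's argument. One caveat, inherited from the paper's notation rather than introduced by you: the unions over $\mu$ in Theorems \ref{formulaLemma} and \ref{pairsforRSKLemma} must be read as disjoint unions tagged by $\mu$, since the same pair of tableaux can arise from two different $\mu$ (for instance $(1)\in P_1^{(2)}\cap P_1^{(1,1)}$); your shape-by-shape matching of the $\mu$-indexed pieces is exactly what makes the count correct under that reading, so it would be worth stating explicitly.
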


\begin{example}\label{running_rectification} Let $a=3, b=2, \ell=9, k=5, \mu = (3,2,2,1,1)$.  In Figure \ref{exrect}(a), we begin with an element of  $Q_3^{(3,2,2,1,1)}$, remove the tableau with entries 1, 2, and 3,  and finally give the rectification of the diagram.  Each box is removed from a different row and the resulting diagram has shape $\tau = (2,1,1,1,1) \in P_3^{(3,2,2,1,1)}$.    Notice that when we subtract $a=3$ from each entry, we obtain the standard Young tableau in Figure \ref{exg}(a) of Example  \ref{running_g}.   Similarly, in Figure \ref{exrect}(b), we take an element in $Q_2^{(3,2,2,1,1)}$, remove the tableau with entries 1 and 2, and give the rectification of the diagram. Each box is removed from a different row and the resulting diagram has shape $\omega =  (2,2,1,1,1) \in P_2^{(3,2,2,1,1)}$.  When we subtract $b=2$ from each entry, we obtain the standard Young tableau in Figure \ref{exg}(b) of Example  \ref{running_g}.

\begin{figure}[h]
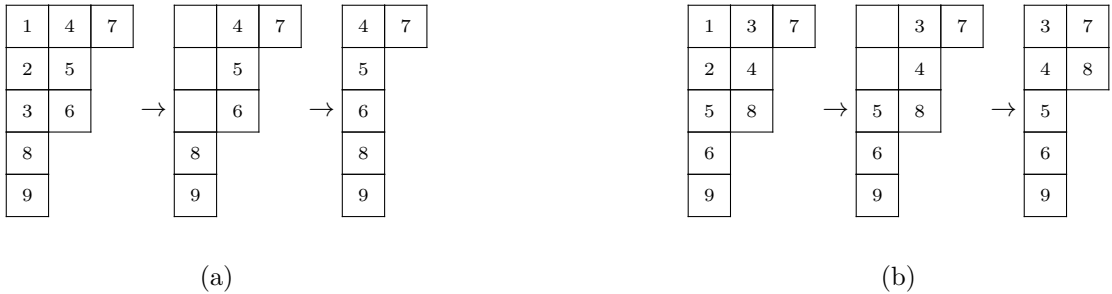

\begin{centering}

\begin{multicols}{2}
$\scriptsize{\tableau{1 & 4 & 7 \\ 2 & 5 \\ 3 & 6 \\ 8 \\ 9} \to \tableau{ & 4 & 7 \\ & 5 \\ & 6 \\ 8 \\ 9} \to\tableau{4 & 7   \\ 5  \\ 6  \\ 8 \\ 9}}$

$\scriptsize{\tableau{1 & 3 & 7 \\ 2 & 4 \\ 5 & 8 \\ 6 \\ 9} \to  \tableau{ & 3 & 7 \\  & 4 \\ 5 & 8 \\ 6 \\ 9} \to \tableau{3 & 7 \\ 4 & 8 \\ 5\\ 6\\9}}$

\end{multicols}

\begin{multicols}{2}
(a)

(b)

\end{multicols}

%
%
%

\end{centering}

\caption{Example of correspondence from rectification}
\label{exrect}
\end{figure}
\end{example}

To give a count of multiplicities in terms of avoiding permutations, we need to use the well-known RSK correspondence (c.f. \cite{Sta}), which associates each permutation of $[\ell] = \{1, 2, \ldots, \ell \}$ with an ordered pair $(M,N)$ of standard Young tableaux of the same shape and $\ell$ boxes.  This correspondence works as follows.  We begin with a permutation $w = w_1w_2\cdots w_\ell$ of $[\ell]$ and an ordered pair of empty standard Young tableaux, $(M = \emptyset, N = \emptyset)$.  First, we set $\left (M = \scriptsize{\tableau{w_1}}, N = \scriptsize{\tableau{1}} \right)$.  Then we insert $w_2, w_3, \ldots w_\ell$ in order into $M$ (and $N$) using Schensted's insertion algorithm (see \cite{Sc}, \cite{Sta}), as follows.  At each step, we insert $w_i$ into the first row of $M$ in the spot it should go in increasing order, either replacing a box with a larger entry (say $w_j$) or appearing at the end of the row.  If a box was replaced, that box $(w_j)$ moves to the next row where it belongs in increasing order and either replaces a box or appears at the end of a row.  For each $w_i$, this process continues until some box is added to the end of a row.  At each step, as the insertion algorithm is completed for each $w_i$, a box with entry $i$ is added to $N$ in the position for which the new box was added to $M$.  It is known that $M$ and $N$ are standard Young tableau of the same shape and that given $(M,N)$, we can recover $w$ \cite[Lemma 3]{Sc}.  

A $(k+1)k\ldots21$-avoiding permutation of $[\ell]$ is a permutation with no decreasing subsequence of length $k+1$.  It is known that  using the RSK correspondence (c.f. \cite{Sta}, Corollary 7.23.12)  the number of ordered pairs of standard Young tableaux with the same shape $\mu \vdash \ell$ and less than or equal to $k$ rows is the same as the number of $(k+1)k(k-1)\cdots21$-avoiding permutations of $[\ell]$.

\begin{theorem}\label{avoidingpermTh}
For $k \geq a+b, \max\{a,b\} \leq \ell \leq  \left \lfloor \frac{n+a+b}{2} \right \rfloor-1$, the multiplicity of the weight $k\Lambda_0 - \lambda^\ell_{a,b}$ in $V(k\Lambda_0)$ is equal to the number of $(k+1)k(k-1)\cdots21$-avoiding permutations of $[\ell]=\{1, 2, \ldots, \ell\}$ in which the subsequence of integers 1 through $a$ is in decreasing order and the first $b$ elements are in decreasing order.
\end{theorem}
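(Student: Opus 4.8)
The plan is to combine Theorem~\ref{pairsforRSKLemma} with the RSK correspondence and then translate the two first-column conditions defining $Q_a^\mu$ and $Q_b^\mu$ into the two stated conditions on permutations. By Theorem~\ref{pairsforRSKLemma}, $\text{mult}_{V(k\Lambda_0)}(k\Lambda_0 - \lambda_{a,b}^\ell)$ is the number of ordered pairs $(M,N)$ of standard Young tableaux of a common shape $\mu \vdash \ell$ with $\max\{a,b\} \le l(\mu) \le k$, where $M \in Q_a^\mu$ and $N \in Q_b^\mu$. Since RSK is a bijection between $S_\ell$ and pairs of standard Young tableaux of the same shape, such a pair $(M,N)$ corresponds to a unique permutation $w = w_1 w_2 \cdots w_\ell$ of $[\ell]$ with insertion tableau $M$ and recording tableau $N$, the common shape $\mu$ being the RSK shape of $w$. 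As recalled just before the statement, $l(\mu) \le k$ is equivalent to $w$ having no decreasing subsequence of length $k+1$, i.e. to $w$ being $(k+1)k(k-1)\cdots21$-avoiding. So it remains only to match the two first-column conditions, and to check that the lower bound on $l(\mu)$ is automatic.

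First I would analyze the recording tableau $N$. When $w_i$ is inserted, a new box is created in the \emph{first} column precisely when $w_i$ is smaller than the current top-left entry of $M$, i.e. when $w_i < \min\{w_1, \ldots, w_{i-1}\}$; equivalently, when position $i$ is a left-to-right minimum of $w$. Hence the entries in the first column of $N$, read top to bottom, are exactly the positions of the left-to-right minima of $w$ in increasing order. Since position $1$ is always a left-to-right minimum, the entries $1, 2, \ldots, b$ all lie in the first column of $N$ if and only if positions $1, 2, \ldots, b$ are all left-to-right minima, which is equivalent to $w_1 > w_2 > \cdots > w_b$. This is exactly the condition that the first $b$ elements of $w$ are in decreasing order, and it characterizes $N \in Q_b^\mu$.

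Next I would handle the insertion tableau $M$ by passing to $w^{-1}$. By the standard symmetry of the RSK correspondence (Sch\"utzenberger's theorem), the insertion tableau of $w$ equals the recording tableau of $w^{-1}$. Thus the values $1, 2, \ldots, a$ lie in the first column of $M$ if and only if they lie in the first column of the recording tableau of $w^{-1}$, which by the previous paragraph is equivalent to $(w^{-1})_1 > (w^{-1})_2 > \cdots > (w^{-1})_a$. Since $(w^{-1})_j$ is the position of the value $j$ in $w$, this says that $a$ appears before $a-1$ before $\cdots$ before $1$ when reading $w$ left to right; that is, the subsequence of $w$ formed by the values $1, 2, \ldots, a$ is in decreasing order. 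This characterizes $M \in Q_a^\mu$.

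Finally, the lower bound $l(\mu) \ge \max\{a,b\}$ is automatic, since the presence of $1, \ldots, a$ in the first column of $M$ forces $\mu$ to have at least $a$ rows and the presence of $1, \ldots, b$ in the first column of $N$ forces at least $b$ rows. Assembling these equivalences, the pairs counted in Theorem~\ref{pairsforRSKLemma} are, under RSK, in bijection with exactly the $(k+1)k(k-1)\cdots21$-avoiding permutations of $[\ell]$ in which the subsequence of the integers $1$ through $a$ is decreasing and the first $b$ elements are decreasing, which gives the theorem. I expect the insertion-tableau condition to be the main obstacle: unlike the first column of the recording tableau, the first column of $M$ is not read off directly from left-to-right minima, so the argument must route through the symmetry $P(w) = Q(w^{-1})$ and a careful verification that ``values $1, \ldots, a$ in the first column of $M$'' transports precisely to the decreasing-subsequence condition on $w$.
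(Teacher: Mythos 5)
Your overall architecture is sound and close in spirit to the paper's: reduce via Theorem~\ref{pairsforRSKLemma} to counting pairs $(M,N)$ with $M\in Q_a^\mu$, $N\in Q_b^\mu$ of a common shape, apply the RSK bijection, and use Schensted's theorem to convert the bound $l(\mu)\leq k$ into $(k+1)k\cdots21$-avoidance; your use of the symmetry $P(w)=Q(w^{-1})$ to handle the insertion tableau is a genuinely different (and elegant) device compared with the paper, which instead tracks directly how the values $1,\ldots,a$ bump one another down the first column. However, there is a genuine error in your key lemma about the recording tableau. You claim that when $w_i$ is inserted, a box is created in the first column of $N$ \emph{precisely} when $w_i<\min\{w_1,\ldots,w_{i-1}\}$, so that the first column of $N$ lists the positions of the left-to-right minima of $w$. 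Only the ``if'' direction is true. For the converse, take $w=132$: inserting $w_3=2$ bumps $3$ into a new second row, so the first column of $N$ is $\{1,3\}$, yet position $3$ is not a left-to-right minimum since $w_1=1<2$. In general the first column of $N$ records the steps at which the bumping path reaches a brand-new row, which can happen for entries that are not minima, so the claimed description of the first column of $N$ is false; and since your treatment of $M$ routes through $Q(w^{-1})$, the same false statement underlies both halves of your argument.

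The equivalence you actually need --- the entries $1,\ldots,b$ all lie in the first column of $N$ if and only if $w_1>w_2>\cdots>w_b$ --- is nevertheless true, but it requires an argument you did not give. The cleanest repair: entry $i$ of $N$ sits in the box created at step $i$, so the entries $1,\ldots,b$ all lie in the first column if and only if the boxes created in the first $b$ steps form a single column of height $b$, i.e.\ the RSK shape of the prefix $w_1w_2\cdots w_b$ is a single column; by Schensted's theorem (the first row length equals the length of the longest increasing subsequence) this holds if and only if $w_1>\cdots>w_b$. Alternatively, argue by induction: if each of the first $j$ insertions created a new row, then after $j$ steps $M$ is a single column, and inserting $w_{j+1}$ into a one-entry first row creates a new row only if $w_{j+1}$ is smaller than that entry, hence smaller than all of $w_1,\ldots,w_j$. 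With this repair your proof is complete, and in fact it is tighter than the paper's own, which explicitly verifies only the forward implication (permutation conditions imply $M\in Q_a^\mu$ and $N\in Q_b^\mu$) and leaves the converse implicit; it is exactly that converse direction that your left-to-right-minima lemma gets wrong.
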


\begin{proof}
Let $w = w_1w_2\cdots w_\ell$ be a $(k+1)k(k-1)\cdots21$-avoiding permutation of $[\ell]$ such that the subsequence of integers 1 through $a$ is in decreasing order and $w_1 > w_2 > \ldots > w_b$.  Applying the RSK correspondence we obtain a pair of Young tableaux $(M,N)$ of shape 
$\mu \vdash \ell$ with less than or equal to $k$ rows in each.

Observe that since $1, 2, \ldots, a$ occur in decreasing order in the permutation $w$, these entries will be inserted in decreasing order into $M$ via Schensted's algorithm.  When each entry $a, a-1, \ldots, 1$ is inserted into $M$, it will be the smallest entry inserted so far.  Thus, will be inserted into the first row and first column of $M$.  So, at its insertion, $a$ will be in the first row and first column of $M$.  Then at the step when $a-1$ is inserted, $a-1$ will replace $a$ in this position, pushing $a$ to the second row;  since $a$ is smaller than all entries in the second row, $a$ will be inserted in its first column.  Subsequently, at the step when $a-2$ is inserted into $M$, $a-2$ will replace $a-1$ in the first row, which will replace $a$ in the second row, which will move to the first column of the third row.  Continuing in this manner, the entries 1 through $a$ will appear in the first column of $M$.  Thus $M \in Q_a^\mu$.



Now we consider the first $b$ entries of $w$: $w_1 > w_2 > \ldots > w_b$.  At the conclusion of the first step of Schensted's algorithm, $(M,N)$ will be equal to $\left (\scriptsize{\tableau{w_1}}, \scriptsize{\tableau{1}}\right )$.  At the second step, $w_2$ will be inserted.  Since $w_2 < w_1$, $w_2$ will push $w_1$ down to the first column of the second row of $M$.  Subsequently, $N$ will have a box with $2$ added to the first column of the second row.  Next, $w_3$ will push down $w_2$ and $w_1$ and add a box with a 3 to the first column of the third row of $N$.  Continuing in this manner, at the conclusion of the insertion of $w_b$, the $b^{th}$ step of Schensted's algorithm, $(M,N)$ will be as in Figure \ref{RSKproof}.  As the remainder of the permutation is inserted ($w_{b+1}$ through $w_\ell$), the entries $w_1, w_2, \ldots, w_b$ may not remain in the first column of $M$.  However, since $N$ just has boxes added to it and not shifted, the entries 1 through $b$ will remain in order in the first column.  Thus, at the conclusion of Schensted's algorithm, $N$ will be an element of $Q_b^\mu$. Therefore, by Theorem \ref{pairsforRSKLemma}  the result holds.

\begin{figure}[h]
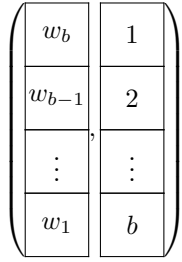

\begin{centering}


$\left ( \tableau{w_b \\ w_{b-1} \\ \vdots \\ w_1 }, \tableau{1 \\ 2 \\ \vdots \\ b} \right )$



%
%
\end{centering}

\caption{$(M,N)$ after $w_b$ inserted}
\label{RSKproof}
\end{figure}
\end{proof}


%

\begin{example}  Let $w = 329861754$.  Note that $w$ is a permutation of $[9]$ with no decreasing subsequence of length 6, the numbers 1, 2, and 3 in decreasing order, and the first two values in decreasing order.  We show that $w$ corresponds to $(M,N)$ in the RSK correspondence.

\begin{figure}[h]
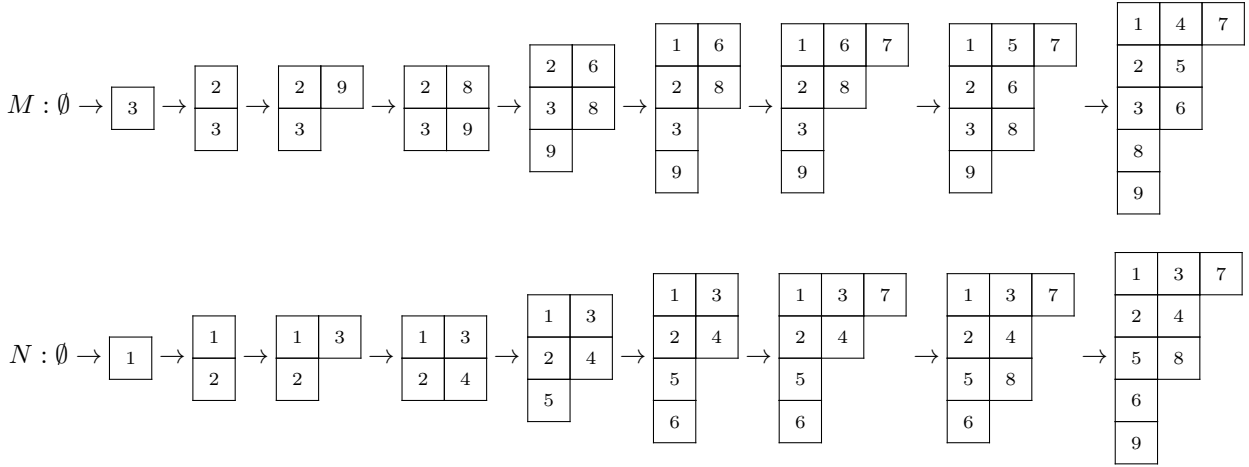

\begin{centering}

$$M: \emptyset \to \scriptsize{\tableau{3}} \to \scriptsize{\tableau{2\\ 3}} \to \scriptsize{\tableau{2 & 9 \\ 3}} \to \scriptsize{\tableau{2 & 8 \\ 3 & 9 }} \to \scriptsize{\tableau{2 & 6 \\ 3 & 8 \\ 9 }} \to \scriptsize{\tableau{1 & 6 \\ 2 & 8 \\ 3 \\ 9 }} \to \scriptsize{\tableau{1 & 6 & 7 \\ 2 & 8 \\ 3 \\ 9 }} \to \scriptsize{\tableau{1 & 5 & 7 \\ 2 & 6 \\ 3 & 8 \\ 9 }}  \to \scriptsize{\tableau{1 & 4 & 7 \\ 2 & 5 \\ 3 & 6 \\ 8 \\ 9 }} $$

$$N: \emptyset \to \scriptsize{\tableau{1}} \to \scriptsize{\tableau{1\\ 2}} \to \scriptsize{\tableau{1& 3 \\ 2}} \to \scriptsize{\tableau{1 & 3 \\ 2 & 4 }} \to \scriptsize{\tableau{1 & 3 \\ 2 & 4 \\ 5 }} \to \scriptsize{\tableau{1 & 3 \\ 2 & 4 \\ 5\\ 6 }} \to \scriptsize{\tableau{1 & 3 & 7 \\ 2 & 4 \\ 5 \\6 }} \to \scriptsize{\tableau{1 & 3 & 7 \\ 2 & 4 \\ 5 & 8 \\ 6 }}  \to \scriptsize{\tableau{1 & 3 & 7 \\ 2 & 4 \\ 5 & 8 \\ 6 \\ 9 }} $$

\end{centering}

\caption{RSK correspondence with $w=329861754$}
\label{RSKex}
\end{figure}
\end{example}
\end{section}

\newpage

\end{document}